\newtheorem{theorem}{Theorem}[section]
\newtheorem{lemma}[theorem]{Lemma}
\newtheorem{proposition}[theorem]{Proposition}
\newtheorem{assumption}[theorem]{Assumption}
\newtheorem{remark}[theorem]{Remark}
\DeclareMathOperator{\proj}{Proj}
\DeclareMathOperator{\midd}{Center}
\DeclareFontFamily{U}{mathx}{\hyphenchar\font45}
\DeclareFontShape{U}{mathx}{m}{n}{
      <5> <6> <7> <8> <9> <10>
      <10.95> <12> <14.4> <17.28> <20.74> <24.88>
      mathx10
      }{}
\DeclareSymbolFont{mathx}{U}{mathx}{m}{n}
\DeclareMathAccent{\widecheck}{0}{mathx}{"71}
\DeclareMathAccent{\wideparen}{0}{mathx}{"75}
\title{Convergence of Actor-Critic Learning for Mean Field Games and Mean Field Control in Continuous Spaces}
\author{
  Jean-Pierre Fouque\thanks{Department of Statistics and Applied Probability, South Hall, University of California, Santa Barbara, CA 93106, USA (E-mail: \href{mailto:fouque@pstat.ucsb.edu}{fouque@pstat.ucsb.edu}). Work supported by NSF grant DMS-1953035.} 
  \and Mathieu Lauri\`ere\thanks{NYU Shanghai Center for Data Science; NYU-ECNU Institute of Mathematical Sciences at NYU Shanghai; NYU Shanghai, 567 West Yangsi Road, Shanghai, 200126, People’s Republic of China (E-mail: 
  \href{mailto:mathieu.lauriere@nyu.edu}{mathieu.lauriere@nyu.edu}).}
  \and  Mengrui Zhang\thanks{Department of Statistics and Applied Probability, South Hall, University of California, Santa Barbara, CA 93106, USA (E-mail: \href{mailto:mengrui@umail.ucsb.edu}{mengrui@umail.ucsb.edu}).
  }}
\renewcommand{\arraystretch}{1.5}
\begin{document}

\maketitle
\centerline{\textbf{Abstract}}
\begin{adjustwidth}{50pt}{50pt} 
We establish the convergence of the deep actor-critic reinforcement learning algorithm presented in \cite{angiuli2023deep} in the setting of continuous state and action spaces with an infinite discrete-time horizon. This algorithm provides solutions to Mean Field Game (MFG) or Mean Field Control (MFC) problems depending on the ratio between two learning rates: one for the value function and the other for the mean field term. In the MFC case, to rigorously identify the limit, we introduce a discretization of the state and action spaces, following the approach used in the finite-space case in \cite{Andrea23}. The convergence proofs rely on a generalization of the two-timescale framework introduced in \cite{Borkar97}. We further extend our convergence results to Mean Field Control Games, which involve locally cooperative and globally competitive populations. Finally, we present numerical experiments for linear-quadratic problems in one and two dimensions, for which explicit solutions are available.
\end{adjustwidth}

\section{Introduction}

Reinforcement Learning (RL) is a branch of machine learning in which an autonomous agent learns to make sequential decisions through interaction with an environment, guided by feedback in the form of rewards or penalties. This process is typically formalized within the mathematical framework of Markov Decision Processes (MDPs)~\cite{sutton2018reinforcement}. Over the past decade, RL has gained significant traction, leading to remarkable advances across diverse domains—from mastering classical games such as Atari~\cite{mnih2015human} and Go~\cite{silver2016mastering}, to providing sophisticated control in robotics~\cite{gu2017deep,vecerik2017leveraging}, and more recently, to reinforcement learning from human feedback~\cite{ouyang2022training}.

While classical RL primarily addresses the single-agent setting, multi-agent reinforcement learning (MARL) extends the paradigm to scenarios where multiple agents learn concurrently while interacting. Foundational surveys~\cite{busoniu2008comprehensive,zhang2021multi} and game-theoretic treatments~\cite{lanctot2017unified,yang2020overview} highlight the richness of this area. However, despite notable successes—particularly in competitive or cooperative games—model-free MARL continues to face scalability challenges as the number of agents increases.

In parallel, the theory of mean field games (MFGs), introduced independently by Lasry and Lions~\cite{MR2295621} and by Caines, Huang, and Malhamé~\cite{MR2346927}, has emerged as a powerful framework for analyzing games with a very large number of symmetrically interacting agents. By focusing on the collective behavior of the population through a representative agent model, MFGs offer a tractable approximation of Nash equilibria that would otherwise be intractable in many-player games. While the classical formulation centers on Nash equilibria, the related concept of mean field control (MFC) shifts attention to socially optimal settings in which agents coordinate to minimize a global cost~\cite{MR3134900,carmona2018probabilisticI-II}.

In recent years, there has been growing interest in leveraging model-free RL to solve MFG and MFC problems. Several algorithmic approaches have been proposed (see \cite{lauriere2022learning} for a survey), though most focus on only one of the two problem classes. For MFGs, two main strands of methods have emerged: those exploiting strict contraction and fixed-point iteration schemes—often implemented via tabular or deep Q-learning~\cite{guo2019learning,cui2021approximately,anahtarci2023q}—and those relying on monotonicity properties, such as fictitious play~\cite{elie2020convergence,perrin2020continuousfp}. \cite{lauriere2022scalable} combined fictitious play with deep RL to solve
finite space MFGs, and~\cite{magnino2025solving} extended the method to continuous space MFGs with possibly local dependence on the distribution. Two-timescale learning strategies have also been applied to MFGs; see, for instance,~\cite{mguni2018decentralised,SubramanianMahajan-2018-RLstatioMFG}.

On the MFC side, the problem can be reformulated as a mean field Markov decision process (MF-MDP), where the mean field becomes part of the state description. This formulation has been explored through value-based methods such as Q-learning~\cite{carmona2023model,gu2021meanQ}, as well as through policy gradient~\cite{CarmonaLauriereTan-2019-LQMFRL}, actor-critic~\cite{frikha2025actor}, and model-based approaches~\cite{pasztor2021efficientmodelbased}. %

The present paper builds on the unified framework proposed in~\cite{Andrea20}, which introduced a two-timescale RL algorithm capable of solving both MFG and MFC problems by appropriately tuning the relative learning rates. This approach jointly updates the estimated population distribution and the representative agent’s value function, enabling convergence to the desired equilibrium—whether Nash (MFG) or socially optimal (MFC)—within a single methodological framework.

In Section~\ref{sec:MF}, we introduce the framework for discrete-time, infinite-horizon mean field problems in continuous spaces. Section~\ref{sec:ClassicalRL} provides a brief review of the actor-critic RL algorithm. The actor-critic algorithms for MFG and MFC problems are presented in Section~\ref{sec:algos}, where we emphasize the differences between the two problems and the introduction of bins (discretization) in the MFC case.
Our convergence results are derived in Section~\ref{sec:problems}, starting with the idealized deterministic problems introduced in Section~\ref{sec:idealized}. The main convergence result for the idealized MFG algorithm is stated in Theorem~\ref{mainThMFG}, and the corresponding result for the idealized MFC algorithm appears in Theorem~\ref{mfc_converge}. The extension to the full asynchronous algorithms with stochastic approximation, introduced in Section~\ref{sec:algos}, is presented in Section~\ref{sec:fullalgo}.
Numerical results for linear-quadratic models are presented in Section~\ref{sec:Num}, beginning with the one-dimensional case in Section~\ref{sec:Num1d} and followed by the two-dimensional case in Section~\ref{sec:Num2d}. In the latter, the introduction of bins in the MFC algorithm leads to a significant improvement in identifying the limiting population distribution (Figure~\ref{fig:Ex_Im6}) compared with the results in~\cite{angiuli2023deep}.
Finally, in Section~\ref{sec:MFCG}, we present a unified actor-critic RL algorithm for Mean Field Control Games, which describe competitive interactions among large numbers of cooperative groups of agents, as introduced in~\cite{10.1145/3533271.3561743} for the discrete-space case.

\section{Asymptotic Infinite Horizon Mean Field Problems}\label{sec:MF}

In this section, we introduce the framework for MFG and MFC problems in the discrete-time, infinite-horizon setting. We further emphasize the mathematical distinction between the two classes of mean-field problems, highlighting that they yield distinct solutions despite the apparent similarities in their formulation.

In both cases, the mathematical setting is
\begin{itemize} 
    \item a measurable function $f$: $\mathbbm{R}^d\times\mathcal{P}(\mathbbm{R}^d)\times\mathbbm{R}^k\to\mathbbm{R}$ known as the running cost denoted by $f(x,\mu,a)$, where $x$ is the individual state, $a$ is the individual action, $\mu$ represents the mean field, $\mathcal{P}(\mathbbm{R}^d)$ is the space of probability measures on $\mathbbm{R}^d$, and $\mathbbm{R}^k$ is the set of actions,
    \item a discount factor $0<\gamma < 1$ which measures the relative importance of future rewards or costs compared to immediate ones,
    \item a discrete-time Markov chain $(X^{\pi,\mu}_n)_{n \ge 0}$ in $\mathbbm{R}^d$ with transition density $p(x,x',\mu,a)$ which depends on the population distribution $\mu \in  \mathcal{P}(\mathbbm{R}^d)$ and the policy $\pi \in \mathcal{P}(\mathbbm{R}^k)$, according to which actions are sampled.
\end{itemize}

Further smoothness assumptions on $f$ and $p$ will be specified later. In particular, we will assume that the policy $\pi$ is in an admissible set $\mathcal{A}$, such that the Markov chain $(X^{\pi,\mu}_n)_{n \ge 0}$ is ergodic for all $\mu \in \mathcal{P}(\mathbbm{R}^d)$.  

In our numerical examples in Section~\ref{sec:Num}, the dynamics will come from the time-discretization of an $\mathbbm{R}^d$-valued controlled diffusion process $(X_t)_{t \ge 0}$, where for the discretization $0<t_1<t_2<\cdots <t_n<\cdots$ we use $X_n$ to represent $X_{t_n}$.

Since our actor-critic reinforcement learning algorithms use policies as actors, next we present the MFG and MFC problems respectively as the search for an equilibrium policy (MFG) and an optimal policy (MFC). Note that we consider infinite-horizon asymptotic MFG and MFC problems, as introduced in \cite{Andrea20} for the finite-space case.

\subsection{Mean Field Game}
\label{sec:mfgdef}

The solution of a mean field game (MFG), referred to as an MFG equilibrium, is a policy--mean field pair
$$(\hat\pi,\hat\mu)\in {\cal A}\times\mathcal{P}(\mathbbm{R}^d),$$
satisfying the following conditions:
\begin{enumerate}
    \item \textbf{Optimality.} For a fixed probability distribution $\hat\mu$, $\hat\pi$ minimizes the stochastic control cost functional
    $$
    \inf_{\pi\in{\cal A}} J_{\hat\mu}(\pi)
    = \inf_{\pi\in{\cal A}} \mathbbm{E}\left[\sum_{n=0}^\infty \gamma^{n} f\left(X_{n}^{\pi,{\hat\mu}},\hat\mu, A_n\right)\right],
    $$
    where the process $(X^{\pi,{\hat\mu}}_{n})_{n\geq 0}$ follows the dynamics
\begin{align*}
    \begin{cases}
        X^{\pi,{\hat\mu}}_{0} \sim \hat\mu_0, \\[4pt]
        P(X^{\pi,{\hat\mu}}_{n+1} \in \mathcal{B} \mid X^{\pi,{\hat\mu}}_{n}=x, A_{n}=a, \hat\mu) 
        = \int_{\mathcal{B}} p(x,x',{\hat\mu},a)\,dx', \\[4pt]
        A_{n} \sim \pi(\cdot|X_{n}^{\pi,\hat\mu}) \quad \text{where actions are sampled independently at each time step } n\geq 0.
    \end{cases}
\end{align*}

\item \textbf{Fixed point.} The probability distribution $\hat\mu$ satisfies the fixed point condition
$$
\hat\mu = \lim_{n\to\infty}\mathcal{L}(X_{n}^{\hat\pi,\hat\mu}),
$$
where $\mathcal{L}(X_{n}^{\hat\pi,\hat\mu})$ denotes the law of $X_{n}^{\hat\pi,\hat\mu}$, assuming the limit exists.
\end{enumerate}

To ensure the above problem is well-defined, we restrict admissible policies $\pi$ to those for which, for any $\mu$, the controlled process $(X^{\pi,\mu}_{n})$ is ergodic, so that in particular the limit $\lim_{n\to\infty} \mathcal{L}(X^{\pi,\mu}_{n})$ exists. 
This property is made precise in Assumption \ref{ACM1}.

\subsection{Mean Field Control}
\label{sec:mfcdef}

The solution of a mean field control (MFC) problem is a policy $\pi^* \in \mathcal{A}$ that minimizes an optimal control problem with McKean--Vlasov (MKV) dynamics:
$$
\inf_{\pi\in\mathcal{A}} J(\pi)
= \inf_{\pi\in\mathcal{A}} \mathbbm{E}\left[\sum_{n=0}^\infty \gamma^{n} f\left(X_{n}^{\pi},\mu^\pi, A_n\right)\right],
$$
where the process $(X^{\pi}_{n})_{n\geq 0}$ follows the dynamics
\begin{align*}
    \begin{cases}
        X^{\pi}_{0} \sim \mu_0, \\[4pt]
        P(X^{\pi}_{n+1} \in \mathcal{B} \mid X^{\pi}_{n}=x, A_{n}=a, \mu^\pi) 
        = \int_{\mathcal{B}} p(x,x',\mu^\pi,a)\,dx', \\[4pt]
        A_{n} \sim \pi(\cdot|X_{n}^{\pi}) \quad \text{where actions are sampled independently at each time step } n \ge 0,
    \end{cases}
\end{align*}
and where $\mu^\pi = \lim_{n\to\infty}\mathcal{L}(X_{n}^\pi)$ denotes the limiting distribution of the state process. 
We denote by $\mu^* := \mu^{\pi^*}$ the limiting mean field distribution corresponding to the optimal control. 

To ensure the above problem is well-defined, we restrict admissible policies $\pi \in \mathcal{A}$ to those for which the process $(X^{\pi}_{n})$ admits a limiting distribution, that is, $\lim_{n\to\infty} \mathcal{L}(X^{\pi}_{n})$ exists.

\subsection{Mean Field Game/Control Distinction}

We now summarize the key mathematical distinction between MFG and MFC problems. In the former, an optimal control problem is solved for a fixed distribution $\mu$, and the corresponding mean field $\hat\mu$ is then recovered as a fixed point. If we define the operator 
$$ 
    \Phi:\mathcal{P}(\mathbb{R}^d)\to\mathcal{P}(\mathbb{R}^d), \qquad
    \Phi(\mu)=\lim_{n\to\infty}\mathcal{L}(X_n^{\tilde{\pi},\mu}),
$$
where $\tilde{\pi}= \arg\min J_{\mu}(\pi)$, then the MFG equilibrium arises as a fixed point of $\Phi$ in the sense that 
$$
    \hat\mu=\Phi(\hat\mu).
$$

In the latter case, the mean field is explicitly the law of the state process throughout the optimization, and the problem can be viewed as a control problem in which the state distribution directly influences the dynamics. This distinction has been well documented in the literature; see, for example,~\cite{carmona2013control,carmona2015mean,carmona2019price,cardaliaguet2019efficiency,carmona2023nash}. In the MFC case, the distribution $\mu^\pi$ evolves with the choice of policy $\pi$, whereas in the MFG case it remains fixed during the optimization step and is subsequently updated through a fixed-point condition. These interpretations play a key role in guiding the development of our algorithms.

In general, 
$$
    (\hat\pi,\hat\mu)\neq(\pi^*,\mu^*),
$$
for the same choice of running cost, discount factor, and state dynamics. This distinction manifests in the RL algorithms we design for MFG and MFC problems. In the MFG setting, the limit is identified as a solution of a Bellman equation, whereas in the MFC setting, the Bellman equation depends explicitly on both the population distribution and the policy.

\section{Classical Actor-Critic Reinforcement Learning}\label{sec:ClassicalRL}

\subsection{A Quick Review of Classical RL}

Classical reinforcement learning (RL) aims to solve a Markov Decision Process (MDP) as follows. At each discrete time $n$, the agent observes her state $X_n$ and chooses an action $A_n$ based on it. The environment then transitions to a new state $X_{n+1}$ and provides a reward $r_{n+1}$ to the agent. The goal of the agent is to find an optimal policy $\pi$ that assigns, for each state, a probability distribution over actions in order to maximize the expected cumulative reward
$$
    \mathbbm{E}\left[\sum_{n=0}^{\infty}\gamma^n r_{n+1}\right],
$$
where $r_{n+1} = r(X_n,A_n)$ is the instantaneous reward, $\gamma\in(0,1)$ is a discounting factor, and $X_{n+1}$ is distributed according to a transition probability that depends on $X_n$ and the action drawn from $\pi(X_n)$.

A policy $\pi: \mathcal{X} \to \mathcal{P}(\mathcal{A})$ specifies, for each state, a probability distribution over actions. The agent’s goal is then to find an optimal policy $\pi^*$ satisfying
$$
    \pi^* \in \arg\max_{\pi} \mathbbm{E}_{\pi}\left[\sum_{n=0}^{\infty}\gamma^n r_{n+1}\right].
$$
Since the reward $r_{n+1} = r(X_n, A_n)$ depends on the current state and action, the expected cumulative reward depends on the chosen policy $\pi$.

For a given policy $\pi$, the state-value function $v_\pi: \mathcal{X} \to \mathbb{R}$ is defined by
$$
    v_\pi(x) = \mathbbm{E}_{\pi}\left[\sum_{n=0}^{\infty}\gamma^n r(X_n, A_n) \,\middle|\, X_0 = x\right].
$$

\subsection{Temporal Difference Method}

The value function satisfies the Bellman equation, which relates the value of the current state to that of the next state:
$$
    v_\pi(x) = \mathbb{E}_{\pi}\big[r_{n+1} + \gamma v_\pi(X_{n+1}) \,\big|\, X_n = x \big].\label{bell}
$$
Because the state transitions are Markovian, Equation~\eqref{bell} holds for all $n \ge 0$, not only at the initial state. Importantly, for a given policy $\pi$, the Bellman equation uniquely determines the value function, an assumption we adopt in the following.

Temporal Difference (TD) methods iteratively update an approximation $V$ of $v_\pi$ to drive the TD error $\delta_n$ to zero at each timestep:
$$
    \delta_n := r_{n+1} + \gamma V(X_{n+1}) - V(X_n),
$$
These TD methods require only the immediate transition sequence $\{X_n, r_{n+1}, X_{n+1}\}$ and do not rely on knowledge of the MDP model.

When a population distribution is involved, the Bellman equations for MFG and MFC take different forms. In MFG, the population distribution is fixed when computing the value function, whereas in MFC, the distribution may evolve with each state-action pair $\{X_n, A_n\}$, as it directly influences the dynamics. Further details will be provided in subsequent sections.

\subsection{Classical Actor-Critic Algorithm}

In the classical actor-critic algorithm for continuous action spaces, the policy is often modeled as a Gaussian distribution:
\begin{equation}
    \pi_\psi(a\mid x) = \mathcal{N}\big(a;\, \mu_\psi(x),\, \Sigma_\psi(x)\big), \quad a \in \mathcal{A}, \; x \in \mathcal{X},
\end{equation}
for which $\nabla_\psi \log \pi_\psi(a\mid x)$ is available in closed form, enabling stable and differentiable updates. This framework couples value estimation with policy improvement, leveraging the sample efficiency of TD learning while enabling gradient-based policy optimization.

The classical actor-critic algorithm also uses the temporal difference method with a differentiable stochastic policy $\pi_\psi$ and a parametric value function $V_\theta(x)$. The performance objective is
\begin{equation}
    J(\psi) = \mathbb{E}_{\pi_\psi}\Big[ \sum_{n=0}^\infty \gamma^n\, r(X_n,A_n) \Big].
\end{equation}
The critic estimates $V^{\pi_\psi}$ via temporal-difference (TD) learning. At time $n$, it forms the TD error
\begin{equation}
    \delta_n = r_n + \gamma\,V_\theta(X_{n+1}) - V_\theta(X_n),
\end{equation}
and performs the semi-gradient update
\begin{equation}
    \theta_{n+1} = \theta_n + \rho^V_n\, \delta_n\, \nabla_\theta V_\theta(X_n).
\end{equation}

The actor applies the policy-gradient theorem,
\begin{equation}
    \nabla_\psi J(\psi) = 
    \mathbb{E}_{\pi_\psi}\big[\nabla_\psi \log \pi_\psi(A_n \mid X_n)\; Q^{\pi_\psi}(X_n,A_n)\big],
\end{equation}
and replaces the action-value function with a low-variance advantage estimate from the critic, e.g., $A_n \approx \delta_n$, to update
\begin{equation}
    \psi_{n+1} = \psi_n + \rho^\Pi_n\, \delta_n\, \nabla_\psi \log \pi_\psi(A_n \mid X_n).
\end{equation}

Under a two-time-scale scheme in which the critic tracks faster than the actor,
\begin{equation}
    \frac{\rho^\Pi_n}{\rho^V_n}\to 0,\qquad 
    \sum_{n}\rho^\Pi_n = \sum_{n}\rho^V_n = \infty,\qquad 
    \sum_{n}{\rho^\Pi_n}^2 < \infty,\ \sum_{n}{\rho^V_n}^2 < \infty,
\end{equation}
under these conditions, the critic converges to $V^{\pi_\psi}$, while the actor asymptotically ascends the performance objective $J(\psi)$.

\section{Actor-Critic Algorithms for Infinite Horizon MFG and MFC Problems}
\label{sec:algos}

\subsection{MFG Algorithm}

The algorithm employs two neural networks: one representing the actor and one representing the critic. It follows the classical actor-critic RL framework, with the key difference that it incorporates the population distribution, which is required by the environment to compute rewards and state transitions. This distribution must be estimated and updated along the trajectory.

We adopt a simplified version of the algorithm from \cite{angiuli2023deep}, where the population distribution is updated directly (line \ref{mu} in Algorithm \ref{algoMFG}) rather than via the score function. Here, $\mu$ denotes an empirical distribution that is updated and stored as a weighted list of the trajectory. This simplification is motivated by two factors: (i) it remains tractable in the MFC setting, whereas using score functions may become intractable, and (ii) in many applications, the cost function $f$ and transition kernel $p$ depend smoothly on the population distribution—often through its mean—which can be updated directly. 

Note that this empirical distribution is necessary because learning occurs along a single trajectory of the state process; it is not required in the idealized setting described in Section~\ref{sec:idealized}. 

\begin{algorithm}[H]
\caption{Infinite Horizon  Mean Field Game Actor-Critic (IH-MFG-AC)} \label{algoMFG}
\begin{algorithmic}[1]
    \REQUIRE 
    Initial distribution $\mu$; number of time steps $N$; time-dependent neural network learning rates for actor $\rho_n^\Pi$, critic $\rho_n^V$; learning rates $\rho_n$ for the population distribution; discount factor $\gamma<1$
    \STATE \textbf{Initialize neural networks:} 
    Actor $\Pi_{\psi_0}$ : $\mathbbm{R}^d\to\mathcal{P}(\mathbbm{R}^k)$, Critic $V_{\theta_0}$ : $\mathbbm{R}^d\to\mathbbm{R}$
    \STATE \textbf{Sample initial state:} $X_0 \sim \mu$ and save $\mu_0$ as $\delta_{X_0}$
    \FOR{$n=0,1,2,\dots, N-1$}
        \STATE \textbf{Sample action:} $A_n \sim \Pi_{\psi_n}(\cdot \mid X_n)$
        \STATE \textbf{Observe reward from the environment:} $r_{n+1} = -f(X_n, \mu_n, A_n)$
        \STATE \textbf{Observe next state from the environment:} $X_{n+1} \sim p(X_n, \cdot, \mu_n, A_n)$
        \STATE \label{mu} \textbf{Update population measure:} 
        $\mu_{n+1} = \mu_n - \rho_n (\mu_n - \delta_{X_{n+1}})$ 
        (store empirical distribution as a weighted list)
        \STATE \textbf{Compute TD target (frozen):} $y_{n+1} = r_{n+1} + \gamma V_{\theta_n}(X_{n+1})$
        \STATE \textbf{Compute TD error:} $\delta_{\theta_n} = y_{n+1} - V_{\theta_n}(X_n)$
        \STATE \textbf{Compute critic loss:} $L_V^{(n)}(\theta_n) = \delta_{\theta_n}^2$
        \STATE \textbf{Update critic with SGD:} $\theta_{n+1} = \theta_n - \rho_n^V \nabla_\theta L_V^{(n)}(\theta_n)$
        \STATE \textbf{Compute actor loss:} $L_\Pi^{(n)}(\psi_n) = -\delta_{\theta_n} \log \Pi_{\psi_n}(A_n \mid X_n)$ 
        (uses TD error as advantage estimate)
        \STATE \textbf{Update actor with SGD:} $\psi_{n+1} = \psi_n - \rho_n^\Pi \nabla_\psi L_\Pi^{(n)}(\psi_n)$
    \ENDFOR
    \STATE \textbf{Return} the final actor network and population distribution $(\Pi_{\psi_N}, \mu_N)$
\end{algorithmic}
\end{algorithm}

\subsection{MFC Algorithm}

The Bellman equation for MFC involves finding an optimal policy over an infinite space. A common approach to solving it is to discretize the population distribution so that the problem reduces to finding an optimal policy over a finite space. Inspired by our previous work~\cite{Andrea23} in the finite-space setting, where we introduced distributions depending on $(x,a)$, we extend this idea to the present continuous-space setting by discretizing both the state and action spaces into bins.  

Specifically, we construct a partition $P_{\mathcal{X}} := \{\mathcal{X}_1,\dots,\mathcal{X}_m\}$ of the state space and a partition $P_{\mathcal{A}} := \{\mathcal{A}_1,\dots,\mathcal{A}_l\}$ of the action space, where $\mathcal{X}_j$ and $\mathcal{A}_k$ are intervals. We then define the bins
\[
    B_{jk} = \mathcal{X}_j \times \mathcal{A}_k, \quad j \in \{1,2,\dots,m\}, \quad k \in \{1,2,\dots,l\}.
\]
The set of bins forms a partition of $\mathcal{X}\times\mathcal{A}$. Introducing a new index $i := (j-1)l + k$, we denote $B^i = B_{jk}$. This indexing is one-to-one, and we define the projection $\proj_{\mathcal{X}}(B^i) = \mathcal{X}_{\lceil i/l \rceil}$.  

For each bin indexed by $i \in \{1, \dots, ml\}$, we define a distribution $\mu^i$. Alongside, for each bin we generate a path; when the path enters the corresponding bin, we select the midpoint action of that bin instead of sampling from the general policy. We denote this midpoint action by $a^i := \midd(\proj_{\mathcal{A}}(B^i))$. Finally, we create an individual path with its own distribution $\mu$, which does not influence the critic network but is used to train the actor network.  

As in the MFG case, the population distributions $\mu^i$ and $\mu$ are empirical distributions, updated and stored as weighted lists of the corresponding trajectories in the state space.  

\begin{algorithm}[H]
\caption{Infinite Horizon  Mean Field Control Actor-Critic (IH-MFC-AC)} 
\label{algo:MFC}
\begin{algorithmic}[1]
    \REQUIRE 
    Initial distributions $\{\mu^i\}$ and $\mu$; number of time steps $N$; time-dependent learning rates for actor $\rho_n^\Pi$, critic $\rho_n^V$, and population distribution $\rho_n$; discount factor $\gamma < 1$.
    
    \STATE \textbf{Initialize neural networks:} 
    Actor $\Pi_{\psi_0}$ : $\mathbbm{R}^d \to \mathcal{P}(\mathbbm{R}^k)$; 
    Critics $V_{\theta^i_0}$ : $\mathbbm{R}^d \to \mathbbm{R}$, $i \in \{1,\dots,ml\}$.
    
    \STATE \textbf{Sample initial states:} $X^i_0 \sim \mu^i$, $X_0 \sim \mu$, and set $\mu^i_0 = \delta_{X^i_0}$, $\mu_0 = \delta_{X_0}$.
    
    \FOR{$n = 0, 1, 2, \dots, N-1$}
        \IF{$X^i_n \in \proj_{\mathcal{X}} B^i$}
            \STATE \textbf{Choose action:} $A^i_n = a^i$
            \STATE \textbf{Observe reward:} $r^{i}_{n+1} = -f(X^i_n, \mu^i_n, a^i)$
            \STATE \textbf{Observe next state:} $X^i_{n+1} \sim p(X^i_n, \cdot, \mu^i_n, a^i)$
            \STATE \textbf{Find minimum value at $X^{i}_{n+1}$:} 
            $V_n(X^{i}_{n+1}) = \min_{j} V_{\theta^j_n}(X^{i}_{n+1})$, 
            where $j$ satisfies $X^{i}_{n+1} \in \proj_{\mathcal{X}} B^j$
            \STATE \textbf{Compute TD target:} $y^{i}_{n+1} = r^{i}_{n+1} + \gamma V_n(X^{i}_{n+1})$
            \STATE \textbf{Compute TD error:} $\delta_{\theta^i_n} = y^{i}_{n+1} - V_{\theta^i_n}(X^i_n)$
            \STATE \textbf{Compute critic loss:} $L_V^{(n)}(\theta^i_n) = (\delta_{\theta^i_n})^2$
            \STATE \textbf{Update critic with SGD:} 
            $\theta^i_{n+1} = \theta^i_n - \rho^V_n \nabla_{\theta^i} L_V^{(n)}(\theta^i_n)$ 
            (freeze TD target and backpropagate only through $V_{\theta^i_n}$)
        \ELSE
            \STATE \textbf{Sample action:} $A^i_n \sim \Pi_{\psi_n}(\cdot \mid X^i_n)$
            \STATE \textbf{Observe next state:} $X^i_{n+1} \sim p(X^i_n, \cdot, \mu^i_n, A^i_n)$
        \ENDIF
        
        \STATE \textbf{Update measure for bin $i$:} 
        $\mu^i_{n+1} = \mu^i_n - \rho_n (\mu^i_n - \delta_{X^i_{n+1}})$ 
        (store as empirical weighted list)
        
        \STATE \textbf{Sample action for individual path:} $A_n \sim \Pi_{\psi_n}(\cdot \mid X_n)$
        \STATE \textbf{Observe reward:} $r_{n+1} = -f(X_n, \mu_n, A_n)$
        \STATE \textbf{Observe next state:} $X_{n+1} \sim p(X_n, \cdot, \mu_n, A_n)$
        \STATE \textbf{Find $V_n$:} 
        \(
        V_n(x) = \min_j V_{\theta^j_n}(x) \quad \text{for } j \text{ such that } x \in \proj_{\mathcal{X}} B^j
        \)
        \STATE \textbf{Compute TD target:} $y_{n+1} = r_{n+1} + \gamma V_n(X_{n+1})$
        \STATE \textbf{Compute TD error:} $\delta_{\theta_n} = y_{n+1} - V_n(X_n)$
        \STATE \textbf{Compute actor loss:} $L_\Pi^{(n)}(\psi_n) = -\delta_{\theta_n} \log \Pi_{\psi_n}(A_n \mid X_n)$
        \STATE \textbf{Update actor with SGD:} $\psi_{n+1} = \psi_n - \rho^\Pi_n \nabla_\psi L_\Pi^{(n)}(\psi_n)$
        \STATE \textbf{Update measure for individual path:} 
        $\mu_{n+1} = \mu_n - \rho_n (\mu_n - \delta_{X_{n+1}})$ 
    \ENDFOR
    
    \STATE \textbf{Return} $(\Pi_{\psi_N}, \mu_N)$
\end{algorithmic}
\end{algorithm}

\begin{remark}
In the numerical implementation described in Section~\ref{sec:numerics}, we slightly modify the algorithm as follows. 
Inspired by DeepSeek’s GRPO~\cite{shao2024deepseekmathpushinglimitsmathematical}, we use group actions (mini-batches) to reduce variance and improve performance. This approach requires either freezing the environment or using a generative model. We then sample multiple next states and use the mean loss to update the critic network,
\(\delta_{\theta^i_n} = \frac{1}{M} \sum_{m=0}^{M-1} \big(y^{i,m}_{n+1} - V_{\theta^i_n}(X^i_n)\big),\)
and similarly average across multiple actions to update the actor,
\(L_\Pi^{(n)}(\psi_n) = -\frac{1}{M} \sum_{m=0}^{M-1} \delta^m_{\theta_n} \log \Pi_{\psi_n}(A^m_n \mid X_n).\)
\end{remark}

\section{Convergence of Idealized MFG and MFC Algoritms}
\label{sec:problems}

\subsection{Idealized Three Time-scale 
Approach}\label{sec:idealized}
\subsubsection{Three time-scale approach for MFG}
To solve the MFG problem that we defined in Sections~\ref{sec:mfgdef}, let us first introduce an idealized deterministic three-timescale approach where we assume that we have access to the expectation with respect to the transition kernel for all states at the same time. Consequently, we consider the following iterative procedure, where all variables are updated at each iteration but at different rates, denoted by $\rho^\mu_n \in \mathbbm{R}_+$, $\rho^V_n \in \mathbbm{R}_+$, and $\rho_n^\Pi\in \mathbbm{R}_+$. Starting from an initial guess $(\mu_0,\psi_0,\theta_0)$,  define iteratively for $n = 0,1,\dots$:
\begin{align}
\label{eq:system-ideal-3}
    \begin{cases}
        \mu_{n+1} = \mu_n - \rho_n^{\mu}L_{\mathcal{P}}(\psi_n,\theta_n,\mu_n),\\
        \theta_{n+1} = \theta_n - \rho_n^V \nabla_\theta L_V(\psi_n,\theta_n,\mu_n),\\
        \psi_{n+1} = \psi_n - \rho^\Pi_n \nabla_\psi L_\Pi (\psi_n,\theta_n,\mu_n).
    \end{cases}
\end{align}
where $L_{\mathcal{P}}$, $L_V$, and $L_\Pi$ are the loss functions of the measure, the critic, and the actor networks defined as follows:
\begin{align}
    \begin{cases}
        L_{\mathcal{P}}(\psi,\theta,\mu)(x) =\mu(x)- \int_{x'}\mu(x')p(x|x',\Pi_{\psi}(a|x'),\mu),\\
        L_V(\psi,\theta,\mu)(x) = \left[f(x,\Pi_{\psi}(a|x),\mu) + e^{-\beta}\int_{x'}p(x'|x,\Pi_{\psi}(a|x),\mu)V_\theta(x')-V_\theta(x)\right]^2,\\
        L_\Pi(\psi,\theta,\mu)(x,a) = -\left[f(x,a,\mu) + e^{-\beta}\int_{x'}p(x'|x,a,\mu)V_\theta(x')-V_\theta(x)\right]\log\Pi_\psi(a|x).
    \end{cases}
\end{align}

For the actor function, we only consider actions such that $\Pi_\psi(a|x)>0$.
The corresponding learning rates satisfy the usual Robbins-Monro-type conditions which will be made precise in  Assumption~\ref{squaresumlearningrate_mfg}.

If $\rho_n^\mu<\rho_n^\Pi<\rho_n^V$ such that $\rho_n^\mu/\rho_n^\Pi \to 0$ and $\rho_n^\Pi/\rho_n^V \to 0$ as $n \to \infty$, the system~\eqref{eq:system-ideal-3} tracks the ODE system:
\begin{align}\label{MFG_system}
    \begin{cases}
        \dot{\mu} = -L_{\mathcal{P}}(\psi,\theta,\mu)\\
        \dot{\psi} = -\frac{1}{\epsilon}\nabla_\psi L_\Pi(\psi,\theta,\mu)\\
        \dot{\theta} = -\frac{1}{\epsilon\tilde{\epsilon}}\nabla_\theta L_V(\psi,\theta,\mu),
    \end{cases}
\end{align}
where $\rho_n^\mu/\rho_n^\Pi$ and $\rho_n^\Pi/\rho_n^V$ are thought of being order $\epsilon \ll 1$ and $\tilde{\epsilon}\ll 1$, respectively. The asymptotic analysis of this system relies on the existence of globally asymptotically stable equilibrium (GASE) for these  equations. This will be made precise in Section~\ref{sec:MFG}.

\subsubsection{Three time-scale approach for MFC}
In MFC, for each bin, we have a measure $\mu^i$ and a vector of parameters for the neural network of state value function $\theta^i$. For the individual path, we create only a measure denoted by $\mu$. The whole vector of $\theta^i$ is then denoted by $\boldsymbol{\theta}$. Finally, we create a control parameter $\psi$. For an admissible policy $\pi$, we define the MKV dynamics by $p(x'|x,a,\mu^\pi)$ so that $\mu^\pi$ is the limiting distribution of the associated process $(X_n^\pi)_n$. We define the policy $\Pi_\psi^i$ for each bin by
\begin{align}\label{alphacontrol}
   \Pi_\psi^i (x') := \begin{cases}
        \delta_{a^i}\hspace{3mm} \text{if}\hspace{5mm} x'\in \proj_{\mathcal{X}}B^i, \\
        \Pi_\psi(x')\hspace{2mm} \text{for}\hspace{3mm} x'\notin \proj_{\mathcal{X}}B^i.
    \end{cases} 
\end{align}

Then the idealized three-timescale approach defined for MFC is as follows, which slightly differ from the corresponding approach defined for MFG:
\begin{align}
    \begin{cases}
        \mu^i_{n+1} = \mu^i_n - \rho_n^{\mu}L^i_{\mathcal{P}}(\psi_n,\boldsymbol{\theta_n},\mu^i_n),\\
        \mu_{n+1} = \mu_n - \rho_n^{\mu}L_{\mathcal{P}}(\psi_n,\boldsymbol{\theta_n},\mu_n),\\
        \theta^i_{n+1} = \theta^i_n - \rho_n^V \nabla_{\theta^i} L^i_V(\psi_n,\boldsymbol{\theta_n},\mu^i_n),\\
        \psi_{n+1} = \psi_n - \rho^\Pi_n \nabla_\psi L_\Pi (\psi_n,\boldsymbol{\theta_n},\mu_n),
    \end{cases}
\end{align}
where $L_{\mathcal{P}}$, $L_V$, and $L_\Pi$ are the loss function of the measure, the critic, and the actor networks as following
\begin{align}\label{mfceq}
    \begin{cases}
        L^i_{\mathcal{P}}(\psi,\boldsymbol{\theta},\mu)(y) =\mu(y)-\int\mu(x)p(y|x,\Pi^i_\psi(a|x),\mu),\\
         L_{\mathcal{P}}(\psi,\boldsymbol{\theta},\mu)(y) = \mu(y)-\int_{x}\mu(x)p(y|x,\Pi_{\psi}(a|x),\mu),\\
        L^i_V(\psi,\boldsymbol{\theta},\mu)(x) = \left[f(x,a^i,\mu) + e^{-\beta}\int_{x'}p(x'|x,a^i,\mu)V_{\boldsymbol{\theta}}(x')-V_{\theta^i}(x)\right]^2 \quad \text{where}\quad x\in \proj_{\mathcal{X}}B^i,\\
        L_\Pi(\psi,\boldsymbol{\theta},\mu)(x,a) = -\left[f(x,a,\mu) + e^{-\beta}\int_{x'}p(x'|x,a,\mu)V_{\boldsymbol{\theta}}(x')-V_{\boldsymbol{\theta}}(x)\right]\log\Pi_\psi(a|x),
    \end{cases}
\end{align}
where the value function $V_{\boldsymbol{\theta}}$ is defined as follows
\begin{align}
    V_{\boldsymbol{\theta}}(x) = \min_{j} V_{\theta^j}(x) \text{ for } j \text{ such that } x\in \proj_{\mathcal{X}}B^j.
\end{align}

 If $\rho_n^\Pi<\rho_n^V<\rho_n^\mu$ such that $\rho_n^\Pi/\rho_n^V \to 0$ and $\rho_n^V/\rho_n^\mu \to 0$ as $n \to \infty$, the system~\eqref{eq:system-ideal-3} tracks the ODE system:
\begin{align}\label{MFC_system}
    \begin{cases}
        \dot{\psi} = -\nabla_\psi L_\Pi(\psi,\boldsymbol{\theta},\mu)\\
        \dot{\theta^i} = -\frac{1}{\epsilon}\nabla_\theta L^i_V(\psi,\boldsymbol{\theta},\mu^i)\\
        \dot{\mu^i} = -\frac{1}{\epsilon\tilde{\epsilon}} L^i_{\mathcal{P}}(\psi,\boldsymbol{\theta},\mu^i)\\
        \dot{\mu} = -\frac{1}{\epsilon\tilde{\epsilon}} L_{\mathcal{P}}(\psi,\boldsymbol{\theta},\mu)\\
    \end{cases}
\end{align}
where $\rho_n^\Pi/\rho_n^V$ and $\rho_n^V/\rho_n^\mu $ are thought of being order $\epsilon \ll 1$ and $\tilde{\epsilon}\ll 1$, respectively. The asymptotic analysis of this system relies on the existence of globally asymptotically stable equilibrium (GASE) for the four equations. This will be made precise in Section \ref{sec:MFC}.

\subsubsection{Assumptions}
In order to establish the convergence result, we make the following assumptions. For the value function $V_\theta$, we consider the following linear parametrization: for all $x\in\mathcal{X}$,
$$
V_{\theta}(x) =\sum_{k=1}^m \phi_k(x)^\top \theta,
$$
where $\theta\in \mathbbm{R}^n$ and $\phi_k:\mathcal{X}\to\mathbbm{R}^n$ is a basis function generating $n$-dimensional features for any state $x$ which satisfy the following property. 
\begin{assumption}\label{ACM3}
The basis functions $\{\phi_k=(\phi_k(x),\hspace{1mm} x\in\mathcal{X}), k=1,2,3,...,m\}$  are linearly independent and $\Phi =[\phi_k] $ has full rank. Also, for every $\theta\in\mathbbm{R}^n$, $\Phi^\top\theta \neq e$, where $e$ is the m-dimensional vector with all entries equal to one. 
\end{assumption}

We need an assumption ensuring that
    under any admissible policy $\pi\in {\cal A}$, the Markov chain resulting from the MDP $\{X^\pi_n,n=0,1,2,...\}$ is ergodic and therefore has a unique  limiting distribution.

\begin{assumption}\label{ACM1}
(Dobrushin condition from \cite{doi:10.1137/S0040585X97986825})
There exists $\alpha>0$ such that for all $x,y \in\mathcal{X}$ and $\pi\in\mathcal{A}$,
\[
\sup_{\mu,\nu}\|P(\cdot|x,\pi(x),\mu) - P(\cdot|y,\pi(y),\nu)\|_1\leq 2(1-\alpha).
\]

    There exists $\lambda \in [0,\alpha)$, such that for all $x\in\mathcal{X}$ and $\mu,\nu \in \mathcal{P}(\mathcal{X})$ and admissible policy $\pi\in\mathcal{A}$ one has
    \[
    d_{TV}(P(\cdot|x,\pi(x),\mu),P(\cdot|x,\pi(x),\nu)\leq \lambda d_{TV}(\mu,\nu)
    \]
\end{assumption}

Then by Theorem 2.2 in \cite{doi:10.1137/S0040585X97986825}, Assumption \ref{ACM1} is an optimal condition for the existence of a unique invariant distribution.

\begin{assumption}\label{ACM2}
    For any pair of state/action, $\pi_\psi(a|x)$ is continuously differentiable with respect to the parameter $\psi$.
\end{assumption}

\subsection{Convergence result of MFG algorithm}\label{sec:MFG}
In this section, we will establish the convergence of the idealized three-timescale approach defined in \eqref{MFG_system}. To obtain the MFG solution, the measure is updated slower than the actor-critic system.

\begin{assumption}\label{squaresumlearningrate_mfg} The learning rate $\rho^Q_n$ and $\rho^{\mu}_n$ are sequences of positive real numbers satisfying
\begin{align*}
    \sum_n \rho^\mu_n = \sum_n \rho^{V}_n = \sum_n \rho^{\Pi}_n = \infty, \quad \sum_n |\rho^\mu_n|^2 + |\rho^{V}_n|^2 + |\rho^{\Pi}_n|^2 <\infty,
    \quad \rho^\mu/\rho^\Pi_n \xrightarrow[n\to+\infty]{} 0,
    \quad \rho^\Pi_n/\rho^V_n \xrightarrow[n\to+\infty]{} 0.
\end{align*}
\end{assumption}

\begin{assumption}\label{MFG_ACGASE}
    For fixed $\mu$, the ODEs of actor-critic system (the second and third ODEs in~\eqref{MFG_system}) have  unique globally asymptotically stable equilibrium (GASE), denoted by $(\theta^*_\mu,\psi^*_\mu)$, so that $\nabla_\theta L_V(\psi^*_\mu,\theta^*_\mu,\mu)=0$ and $\nabla_\psi L_\Pi(\psi^*_\mu,\theta^*_\mu,\mu)=0$.
\end{assumption}
Consequently, we obtain the following result. 
\begin{proposition}\label{MFG_ACGASEPRO}
    Under Assumptions \ref{ACM1}, \ref{ACM2}, \ref{ACM3}, \ref{squaresumlearningrate_mfg} and \ref{MFG_ACGASE}, for fixed $\mu$, as $n\to\infty$, $\theta_n \to \theta^*_\mu$ and $\psi_n \to \psi^*_\mu$ where $(\theta^*_\mu,\psi^*_\mu)$ is the GASE mentioned in Assumption \ref{MFG_ACGASE}.
\end{proposition}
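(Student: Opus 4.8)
The plan is to recognize that, with $\mu$ held fixed, the recursion \eqref{eq:system-ideal-3} collapses to a two-timescale scheme in the pair $(\theta_n,\psi_n)$, with the critic $\theta$ on the fast timescale (rate $\rho^V_n$) and the actor $\psi$ on the slow timescale (rate $\rho^\Pi_n$), and to apply the ODE method of \cite{Borkar97}. Since we are in the idealized deterministic system, the ``noise'' terms of stochastic approximation vanish: the updates are exact expectations, so the recursions are precisely Euler discretizations of the last two ODEs in \eqref{MFG_system}, and the martingale-difference hypotheses of the two-timescale theorem hold trivially. What remains is to verify the structural hypotheses (Lipschitz vector fields, step-size conditions, boundedness of the iterates) and the globally asymptotically stable equilibrium structure of the fast and slow limiting ODEs.

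First I would establish regularity of the two driving vector fields $\nabla_\theta L_V(\psi,\theta,\mu)$ and $\nabla_\psi L_\Pi(\psi,\theta,\mu)$. Under the linear parametrization $V_\theta = \Phi^\top\theta$ (Assumption \ref{ACM3}), the critic loss $L_V$ is quadratic in $\theta$, so $\nabla_\theta L_V$ is affine in $\theta$ and globally Lipschitz; the full-rank condition of Assumption \ref{ACM3} together with the ergodicity guaranteed by Assumption \ref{ACM1} ensures the associated matrix is positive definite. Continuous differentiability of the policy in $\psi$ (Assumption \ref{ACM2}) and smoothness of $f$ and $p$ give local Lipschitz continuity of $\nabla_\psi L_\Pi$. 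Combined with the Robbins--Monro conditions and the separation $\rho^\Pi_n/\rho^V_n\to 0$ (Assumption \ref{squaresumlearningrate_mfg}), this installs the two-timescale machinery.

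Next I would carry out the fast-timescale analysis: treating $\psi$ as quasi-static, the critic ODE $\dot\theta = -\nabla_\theta L_V(\psi,\theta,\mu)$ is linear with a positive-definite matrix and hence has a unique globally asymptotically stable equilibrium $\theta=\bar\theta_\mu(\psi)$, and since that matrix is nonsingular and depends smoothly on $\psi$, the map $\bar\theta_\mu$ is Lipschitz. The standard tracking lemma then gives $\|\theta_n - \bar\theta_\mu(\psi_n)\|\to 0$. On the slow timescale, substituting $\theta=\bar\theta_\mu(\psi)$ into the actor dynamics yields the reduced ODE $\dot\psi = -\nabla_\psi L_\Pi(\psi,\bar\theta_\mu(\psi),\mu)$; Assumption \ref{MFG_ACGASE} provides that the coupled actor--critic system has the unique globally asymptotically stable equilibrium $(\theta^*_\mu,\psi^*_\mu)$, so the reduced ODE has equilibrium $\psi^*_\mu$ with $\bar\theta_\mu(\psi^*_\mu)=\theta^*_\mu$. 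Invoking the two-timescale convergence theorem then yields $\psi_n\to\psi^*_\mu$ and therefore $\theta_n\to\theta^*_\mu$.

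The main obstacle I expect is twofold. First, establishing a priori boundedness (stability) of the iterates $(\theta_n,\psi_n)$, which the ODE method requires but which does not follow from the equilibrium assumption alone; I would address this via a Borkar--Meyn scaling argument or an explicit Lyapunov function built from the positive-definite critic matrix. Second, verifying that the fast equilibrium $\bar\theta_\mu(\psi)$ is Lipschitz in the slow variable $\psi$ uniformly enough to apply the tracking lemma requires care, since $\psi$ enters $L_V$ nonlinearly through both the transition kernel and the running cost. A further point to handle cleanly is reconciling the semi-gradient (frozen-target) form of the TD critic update used in the algorithm with the full gradient $\nabla_\theta L_V$ appearing in the idealized ODE, as the two determine different, though both positive-definite, limiting matrices.
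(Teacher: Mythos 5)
Your proposal is correct in substance but is executed quite differently from the paper. The paper's own proof is a one-line citation: it invokes Theorem~1 of \cite{NIPS2007_6883966f} and Theorem~3 of \cite{NIPS1999_6449f44a} (packaged actor--critic convergence and policy-gradient results) and does not unpack the two-timescale mechanism at all. You instead reconstruct the argument from first principles via the ODE method of \cite{Borkar97}: the linear parametrization of Assumption~\ref{ACM3} makes the critic vector field affine in $\theta$ with a positive-definite matrix, giving a unique fast equilibrium $\bar\theta_\mu(\psi)$; a tracking lemma forces $\|\theta_n-\bar\theta_\mu(\psi_n)\|\to 0$; and the reduced slow ODE for $\psi$ converges to the equilibrium supplied by Assumption~\ref{MFG_ACGASE}. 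What your route buys is that it surfaces exactly the verification obligations the citation hides --- a priori boundedness of the iterates, Lipschitz continuity of $\psi\mapsto\bar\theta_\mu(\psi)$, and the mismatch between the frozen-target semi-gradient update of Algorithm~\ref{algoMFG} and the full gradient $\nabla_\theta L_V$ appearing in the idealized system \eqref{eq:system-ideal-3} --- none of which the paper addresses. One point worth noting: Assumption~\ref{MFG_ACGASE} as written posits a GASE for the \emph{coupled} actor--critic ODEs, whereas the standard two-timescale theorem requires a GASE for the frozen-$\psi$ fast ODE and separately for the reduced slow ODE; your derivation of the fast GASE from positive definiteness of the critic matrix is the right way to bridge that gap, and is in fact more careful than the paper's citation-only argument.
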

\begin{proof}
    Under Assumption \ref{ACM1}, \ref{ACM2}, \ref{ACM3}, \ref{squaresumlearningrate_mfg} and \ref{MFG_ACGASE}, we can apply Theorem 1 in \cite{NIPS2007_6883966f} and Theorem~3 in \cite{NIPS1999_6449f44a}. Consequently this result guarantees the convergence in the statement.
\end{proof}
Next, we consider the convergence of $\mu$ and then obtain the unique limiting distribution. 
\begin{assumption}\label{secondGASE_mfg}
    The first ODE in \eqref{MFG_system} has a unique GASE, denoted by $\mu^*$ such that $ L_{\mathcal{P}}(\psi^*_{\mu^*},\theta^*_{\mu^*},\mu^*) = 0$.
\end{assumption}
We now make precise our earlier statement that  $f$ and $p$ depend smoothly on $\mu$.
\begin{lemma}\label{lipmu} Assuming that $f$ and $p$ are Lipschitz  with respect to $\mu$, then 
    $L_V$, $L_\Pi$, $L_{\mathcal{P}}$ are also Lipschitz respect to $\mu$.
\end{lemma}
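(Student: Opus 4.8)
The plan is to exploit the shared structure of the three loss functions. Both $L_V$ and $L_\Pi$ are built from a single temporal-difference expression
$$
g_\mu(x,a) := f(x,a,\mu) + e^{-\beta}\int_{x'} p(x'|x,a,\mu)\,V_\theta(x')\,dx' - V_\theta(x),
$$
so I would first prove that, for fixed $(\psi,\theta)$, the map $\mu\mapsto g_\mu$ is Lipschitz, and then propagate this property through the square (for $L_V$) and through multiplication by the $\mu$-independent factor $\log\Pi_\psi(a|x)$ (for $L_\Pi$). The measure loss $L_{\mathcal{P}}$ is handled separately by an add-and-subtract argument.

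For the inner term, fix $(x,a)$ and write, for two measures $\mu,\nu$,
$$
|g_\mu(x,a)-g_\nu(x,a)| \le |f(x,a,\mu)-f(x,a,\nu)| + e^{-\beta}\Big|\int_{x'}\big(p(x'|x,a,\mu)-p(x'|x,a,\nu)\big)V_\theta(x')\,dx'\Big|.
$$
The first term is at most $L_f\,d_{TV}(\mu,\nu)$ by the Lipschitz assumption on $f$. For the second, I would bound it by $e^{-\beta}\|V_\theta\|_\infty\,\|p(\cdot|x,a,\mu)-p(\cdot|x,a,\nu)\|_1$, which is controlled by $d_{TV}(\mu,\nu)$ via the Lipschitz assumption on $p$ (consistent with the total-variation bound already supplied by Assumption~\ref{ACM1}). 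Here I use that $V_\theta$ is bounded for fixed $\theta$: under the linear parametrization $V_\theta=\sum_k \phi_k^\top\theta$ with bounded features $\phi_k$ on the (compact) state space, $\|V_\theta\|_\infty<\infty$. This shows $g$ is Lipschitz in $\mu$ with a constant depending on $\theta$ but not on $\mu,\nu$.

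Propagating to $L_V=g_\mu^2$ and $L_\Pi$ is then routine. For the square I would use $|g_\mu^2-g_\nu^2|=|g_\mu+g_\nu|\,|g_\mu-g_\nu|\le(\|g_\mu\|_\infty+\|g_\nu\|_\infty)\,|g_\mu-g_\nu|$; since $f$ and $V_\theta$ are bounded and $p(\cdot|x,a,\mu)$ integrates to one, $g$ is uniformly bounded in $\mu$, so the prefactor is bounded and $L_V$ inherits the Lipschitz property. For $L_\Pi$, the bracketed term is exactly the inner expression with the fixed action $a$, hence Lipschitz in $\mu$ by the argument above, while $\log\Pi_\psi(a|x)$ does not depend on $\mu$; since we restrict to actions with $\Pi_\psi(a|x)>0$, this factor is finite, so $L_\Pi$ is a $\mu$-independent multiple of a Lipschitz function and is therefore Lipschitz. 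For $L_{\mathcal{P}}$ I would split
$$
\mu(x')p(x|x',\cdot,\mu)-\nu(x')p(x|x',\cdot,\nu)=\mu(x')\big(p(x|x',\cdot,\mu)-p(x|x',\cdot,\nu)\big)+\big(\mu(x')-\nu(x')\big)p(x|x',\cdot,\nu),
$$
integrate, and bound the first part by $L_p\,d_{TV}(\mu,\nu)$ (using that $\mu$ is a probability measure) and the second by $\|p\|_\infty$ times $\|\mu-\nu\|$, finally adding the leading term $|\mu(x)-\nu(x)|$.

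The hard part is not the algebra but securing the two boundedness facts the argument rests on: the uniform bound $\|V_\theta\|_\infty<\infty$ (used both in the inner Lipschitz estimate and in the squaring step) and the finiteness of the $\log\Pi_\psi$ factor. Both hinge on working over a compact state/action space with bounded features and on the standing restriction to actions where $\Pi_\psi>0$. One must also keep the choice of metric consistent, measuring the $p$-differences in total variation as in Assumption~\ref{ACM1}, so that the Lipschitz hypotheses on $f$ and $p$ combine cleanly with the estimate $\|\int(p_\mu-p_\nu)V_\theta\|\le\|V_\theta\|_\infty\|p_\mu-p_\nu\|_1$.
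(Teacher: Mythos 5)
The paper states Lemma~\ref{lipmu} without any proof (it is invoked later only through Proposition~\ref{mfclip}, whose one-line proof simply cites the lemma and the linear parametrization), so there is no argument in the paper to compare yours against; your proposal supplies the missing argument. The structure you use is the natural one and is consistent with what the paper does elsewhere: your add-and-subtract decomposition for $L_{\mathcal{P}}$ is exactly the computation the authors carry out inside the proof of Proposition~\ref{gase_mu} to get the contraction estimate $\|\mu\mathrm{P}^{\Pi_\psi,\mu}-\nu\mathrm{P}^{\Pi_\psi,\nu}\|_1\le(1-\alpha+\lambda)\|\mu-\nu\|_1$, and measuring the $p$-increments in total variation matches the second half of Assumption~\ref{ACM1}. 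Your bounds for $g_\mu$, for the square, and for the $\log\Pi_\psi$ factor are all correct as algebra. The one point you correctly flag but should not wave away is the boundedness input: the state space here is $\mathbbm{R}^d$, not compact, $V_\theta=\sum_k\phi_k^\top\theta$ is unbounded unless the features are bounded, and in the paper's own linear-quadratic benchmark $f$ is quadratic and hence unbounded, so $\|V_\theta\|_\infty<\infty$ and $\sup_\mu\|g_\mu\|_\infty<\infty$ are genuine additional hypotheses rather than consequences of the stated assumptions. As written, your proof establishes the lemma on a compact state/action space with bounded features (or, more honestly, a locally Lipschitz statement on $\mathbbm{R}^d$), which is almost certainly what the authors intend but is worth stating explicitly, since the Lipschitz constant you obtain for $L_V$ depends on $\|\theta\|$ through both $\|V_\theta\|_\infty$ and the prefactor $\|g_\mu\|_\infty+\|g_\nu\|_\infty$.
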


\begin{proposition}\label{mfg_converge}
    Under Assumptions \ref{ACM1}, \ref{ACM2}, \ref{ACM3}, \ref{squaresumlearningrate_mfg}, \ref{secondGASE_mfg}, $\mu_n\to\mu^*$, $\theta_n \to \theta^*_{\mu^*}$ and $\psi_n \to \psi^*_{\mu^*}$, as $n\to\infty$.
\end{proposition}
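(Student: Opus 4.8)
The plan is to treat this as a nested two-timescale argument in the spirit of~\cite{Borkar97}, using Proposition~\ref{MFG_ACGASEPRO} to collapse the actor--critic pair into a single fast block and treating the measure $\mu$ as the single slow variable. The inner timescale separation between $\theta$ and $\psi$ has already been absorbed into Proposition~\ref{MFG_ACGASEPRO}; what remains is the outer separation between the $(\theta,\psi)$ block and $\mu$. First I would record that, thanks to the rate ordering $\rho_n^\mu/\rho_n^\Pi\to 0$ and $\rho_n^\Pi/\rho_n^V\to 0$, the measure update in the first line of~\eqref{eq:system-ideal-3} is asymptotically negligible compared with the $(\theta,\psi)$ updates. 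Hence on the fast timescale $\mu_n$ is quasi-stationary, and Proposition~\ref{MFG_ACGASEPRO} applies with $\mu$ frozen, so the fast variables track the GASE $(\theta^*_{\mu_n},\psi^*_{\mu_n})$.

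Next I would identify the effective slow dynamics. Substituting the equilibrated fast variables into the measure update, $\mu_n$ asymptotically follows the averaged ODE
$$
\dot\mu = -L_{\mathcal{P}}\big(\psi^*_\mu,\theta^*_\mu,\mu\big) =: -\bar L_{\mathcal{P}}(\mu).
$$
To invoke the ODE method I must verify that $\bar L_{\mathcal{P}}$ is Lipschitz continuous in $\mu$. Lemma~\ref{lipmu} supplies Lipschitz continuity of $L_{\mathcal{P}}$ in its measure argument, and Assumption~\ref{ACM2} gives smoothness in $\psi$; combined with continuity of the equilibrium map $\mu\mapsto(\theta^*_\mu,\psi^*_\mu)$, this yields the required regularity of $\bar L_{\mathcal{P}}$. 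By Assumption~\ref{secondGASE_mfg} the slow ODE has the unique GASE $\mu^*$, and I would note that the second part of Assumption~\ref{ACM1} (the $d_{TV}$ bound with $\lambda<\alpha$) is precisely what renders the fixed-point map $\Phi$ a contraction and thereby underwrites the global asymptotic stability of $\mu^*$. The two-timescale tracking lemma then gives $\mu_n\to\mu^*$, and since the fast block tracks $(\theta^*_{\mu_n},\psi^*_{\mu_n})$, continuity of the equilibrium map yields $\theta_n\to\theta^*_{\mu^*}$ and $\psi_n\to\psi^*_{\mu^*}$.

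The main obstacle I expect is establishing continuity of the equilibrium map $\mu\mapsto(\theta^*_\mu,\psi^*_\mu)$: Lemma~\ref{lipmu} controls the loss functions, but not automatically their equilibria. I would handle this either via an implicit function theorem argument at a non-degenerate equilibrium, requiring the Jacobian of $(\nabla_\theta L_V,\nabla_\psi L_\Pi)$ to be invertible there, or via the standard fact that a uniformly asymptotically stable equilibrium depends continuously on parameters entering the vector field continuously. A secondary technical point is the boundedness of the iterates demanded by the ODE method, together with the fact that $\mu$ lives in the infinite-dimensional space $\mathcal{P}(\mathbb{R}^d)$. The contraction property in Assumption~\ref{ACM1} should supply both the needed stability and a priori bounds, but one must confirm that the ODE method remains valid in the measure-valued setting, for instance by reducing to a finite-dimensional sufficient statistic such as the mean, as the smooth-dependence discussion preceding Algorithm~\ref{algoMFG} suggests.
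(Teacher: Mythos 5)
Your proposal follows the same route as the paper: collapse the actor--critic pair into the fast block via Proposition~\ref{MFG_ACGASEPRO}, use Lemma~\ref{lipmu} for the Lipschitz regularity and Assumption~\ref{secondGASE_mfg} for the unique GASE of the slow measure ODE, and conclude by the two-timescale theorem of \cite[Theorem~1.1]{Borkar97}. The paper's proof is exactly this citation with these ingredients, stated without elaboration; the technical points you flag (continuity of the equilibrium map $\mu\mapsto(\theta^*_\mu,\psi^*_\mu)$, boundedness of iterates, and validity of the ODE method on the infinite-dimensional space $\mathcal{P}(\mathbb{R}^d)$) are real but are left implicit in the paper.
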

\begin{proof}
    With Assumptions \ref{ACM1}, \ref{ACM2}, \ref{ACM3}, \ref{squaresumlearningrate_mfg}, \ref{secondGASE_mfg},  and the results of Propositions \ref{MFG_ACGASEPRO} and \ref{lipmu}, the assumptions of \cite[Theorem~1.1]{Borkar97} are satisfied, and therefore  guarantees the convergence in the statement.
\end{proof}
\begin{theorem}\label{mainThMFG}
    $(\mu^*,\psi^*_{\mu^*},\theta^*_{\mu^*})$ forms a solution of the MFG problem.
\end{theorem}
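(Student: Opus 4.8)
The plan is to verify that the pair $(\hat\pi,\hat\mu) := (\Pi_{\psi^*_{\mu^*}},\mu^*)$ satisfies the two defining conditions of an MFG equilibrium from Section~\ref{sec:mfgdef}: the optimality condition and the fixed-point condition. By Proposition~\ref{mfg_converge} the iterates converge to $(\mu^*,\psi^*_{\mu^*},\theta^*_{\mu^*})$, so it suffices to read off the equilibrium structure from the three stationarity identities available at the limit, namely $L_{\mathcal{P}}(\psi^*_{\mu^*},\theta^*_{\mu^*},\mu^*)=0$ from Assumption~\ref{secondGASE_mfg} together with $\nabla_\theta L_V(\psi^*_{\mu^*},\theta^*_{\mu^*},\mu^*)=0$ and $\nabla_\psi L_\Pi(\psi^*_{\mu^*},\theta^*_{\mu^*},\mu^*)=0$ from Assumption~\ref{MFG_ACGASE} evaluated at $\mu=\mu^*$. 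Throughout I identify the discount as $\gamma=e^{-\beta}$.

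First I would establish the fixed-point condition. Writing out $L_{\mathcal{P}}(\psi^*_{\mu^*},\theta^*_{\mu^*},\mu^*)(x)=0$ gives
\[
\mu^*(x)=\int_{x'}\mu^*(x')\,p\big(x\mid x',\Pi_{\psi^*_{\mu^*}}(a\mid x'),\mu^*\big)\,dx',
\]
which says precisely that $\mu^*$ is an invariant distribution of the controlled chain $(X_n^{\hat\pi,\hat\mu})$ run under policy $\hat\pi$ and frozen mean field $\hat\mu$. Assumption~\ref{ACM1} (the Dobrushin condition) guarantees that this chain is ergodic with a unique invariant law, which therefore coincides with $\lim_{n\to\infty}\mathcal{L}(X_n^{\hat\pi,\hat\mu})$. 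Hence $\hat\mu=\lim_{n\to\infty}\mathcal{L}(X_n^{\hat\pi,\hat\mu})$, i.e. condition~2 holds.

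Next I would establish the optimality condition. The critic stationarity $\nabla_\theta L_V=0$, combined with the linear parametrization $V_\theta=\Phi^\top\theta$ and the full-rank/non-degeneracy Assumption~\ref{ACM3}, pins down the (projected) temporal-difference fixed point, so that $V_{\theta^*_{\mu^*}}$ solves the fixed-policy Bellman equation
\[
V_{\theta^*_{\mu^*}}(x)=f\big(x,\Pi_{\psi^*_{\mu^*}}(a\mid x),\mu^*\big)+\gamma\int_{x'}p\big(x'\mid x,\Pi_{\psi^*_{\mu^*}}(a\mid x),\mu^*\big)\,V_{\theta^*_{\mu^*}}(x')\,dx';
\]
in the idealized setting, where expectations over the transition kernel are available exactly and the features are rich enough, this identifies the critic as the exact evaluation of $\hat\pi$ under the frozen field $\hat\mu$. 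With the critic identified as the true value function, the bracketed term inside $L_\Pi$ becomes the genuine advantage of $\hat\pi$, and the actor stationarity $\nabla_\psi L_\Pi=0$ is exactly the vanishing of the policy gradient for the stochastic control cost $J_{\hat\mu}$. Since $(\theta^*_{\mu^*},\psi^*_{\mu^*})$ is the \emph{globally} asymptotically stable equilibrium of the actor-critic ODEs (Assumption~\ref{MFG_ACGASE}), the two-timescale actor-critic convergence theory of \cite{NIPS1999_6449f44a,NIPS2007_6883966f} identifies $\hat\pi$ as a minimizer of $J_{\hat\mu}$ over $\mathcal{A}$, which is condition~1. Combining the two conditions shows $(\hat\pi,\hat\mu)$ is an MFG equilibrium.

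The main obstacle is this last step: passing from the first-order stationarity $\nabla_\psi L_\Pi=0$ to genuine \emph{global} optimality of $\hat\pi$ for the fixed-field control problem. A vanishing policy gradient only certifies a stationary (a priori merely local) point of the generally non-convex objective $J_{\hat\mu}$, so the identification of this point with the global minimizer is precisely what the global asymptotic stability in Assumption~\ref{MFG_ACGASE} is designed to supply. I would therefore take care to state explicitly that the GASE property, rather than bare gradient vanishing, is what upgrades the critical point to the optimal policy, and to verify that the admissibility restriction $\Pi_\psi(a\mid x)>0$ (full support of the actor) ensures every action is evaluated, so that no spurious stationary policy can arise from an unexplored action.
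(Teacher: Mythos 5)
Your proposal is correct and follows the same basic strategy as the paper---identify the limit via Proposition~\ref{mfg_converge} and read the two MFG equilibrium conditions off the stationarity identities holding there---but it is substantially more complete than the paper's own proof, which consists of a single displayed policy-evaluation Bellman equation followed by the assertion that the triple forms an MFG solution. You carry out two steps the paper leaves implicit. First, you explicitly verify the fixed-point condition by unpacking $L_{\mathcal{P}}(\psi^*_{\mu^*},\theta^*_{\mu^*},\mu^*)=0$ into the invariance equation for $\mu^*$ and invoking the Dobrushin condition (Assumption~\ref{ACM1}) to identify the unique invariant law with $\lim_{n\to\infty}\mathcal{L}(X_n^{\hat\pi,\hat\mu})$; the paper's proof never addresses condition~2 at all. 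Second, the paper exhibits only the fixed-policy equation $V_{\theta^*_{\mu^*}}(x)=\mathbbm{E}_{\pi^*}\left[r_{n+1}+\gamma V_{\theta^*_{\mu^*}}(X_{n+1})\mid X_n=x\right]$, which certifies that the critic evaluates the limiting policy but not that this policy minimizes $J_{\hat\mu}$; you correctly separate policy evaluation from policy optimality and locate the missing ingredient in the actor stationarity $\nabla_\psi L_\Pi=0$ plus the global asymptotic stability of Assumption~\ref{MFG_ACGASE}. The caveat you flag is genuine and applies equally to the paper's argument: first-order stationarity of a non-convex objective does not by itself yield a global minimizer, and GASE of the ODE guarantees convergence to the unique stationary point rather than directly certifying its optimality, so this identification is an assumption that both you and the paper ultimately rely on---yours has the merit of saying so. One additional point worth keeping: since $L_V$ is the full squared Bellman residual, $\nabla_\theta L_V=0$ yields a least-squares (projected) fixed point, and the exact Bellman identity requires the linear features of Assumption~\ref{ACM3} to be rich enough that the residual itself vanishes; you note this qualification, which the paper's proof silently assumes.
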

\begin{proof}
        From $(\mu^*,\psi^*_{\mu^*},\theta^*_{\mu^*})$ we can uniquely obtain the corresponding $(\mu^*,\pi^*,V^*)$ such that they satisfy the following Bellman equation. Given the optimal $\pi^* = \Pi_{\psi^*_{\mu^*}}$
        \[
            V_{\theta^*_{\mu^*}}(x) = \mathbbm{E}_{\pi^*}\left[r_{n+1}+\gamma V_{\theta^*_{\mu^*}}(X_{n+1})|X_{n}=x\right] 
        .\]
        Consequently they form an MFG solution.
\end{proof}

\subsection{Convergence result of MFC Algorithm}\label{sec:MFC}
In this section, we establish the convergence of the idealized three-timescale approach defined in \eqref{MFC_system}. Under MFC, the measures are updated faster than the actor-critic system.

\begin{assumption}\label{squaresumlearningrate_mfc} The learning rate $\rho^Q_n$ and $\rho^{\mu}_n$ are sequences of positive real numbers satisfying
\begin{align*}
    \sum_n \rho^\mu_n = \sum_n \rho^{V}_n = \sum_n \rho^{\Pi}_n = \infty, \quad \sum_n |\rho^\mu_n|^2 + |\rho^{V}_n|^2 + |\rho^{\Pi}_n|^2 <\infty,
    \quad \rho^\Pi_n/\rho^V_n \xrightarrow[n\to+\infty]{} 0,
    \quad \rho^V_n/\rho^\mu_n \xrightarrow[n\to+\infty]{} 0.
\end{align*}
\end{assumption}
\begin{proposition}\label{mfclip}
    $L^i_{\mathcal{P}}$, $L_{\mathcal{P}}$, $L_V$ and $L_\Pi$ are Lipschitz with respect to $\mu$, $\theta$ and $\psi$.
\end{proposition}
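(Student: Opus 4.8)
The plan is to establish the Lipschitz property separately in each of the three arguments and then combine them, exploiting the fact that all four loss functions are built from the same elementary ingredients: the cost $f$, the transition density $p$, the linearly parametrized value function $V_\theta$, and the policy $\Pi_\psi$. For the $\mu$-dependence I would invoke Lemma~\ref{lipmu} directly for $L_{\mathcal{P}}$, $L_V$ and $L_\Pi$, and repeat its argument verbatim for the bin versions $L^i_{\mathcal{P}}$ and $L^i_V$, since these differ only by restricting the action to the fixed midpoint $a^i$ and the integration to $\proj_{\mathcal{X}}B^i$; the Lipschitz constants in $\mu$ are then inherited from those of $f$ and $p$ together with the boundedness of $V_{\boldsymbol{\theta}}$.

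For the $\theta$-dependence the key observation is that the linear parametrization $V_\theta(x)=\sum_k \phi_k(x)^\top\theta$ makes $\theta\mapsto V_\theta(x)$ affine, hence Lipschitz with constant $\sup_x\sum_k\|\phi_k(x)\|$ (finite by continuity of the $\phi_k$ over the compact, discretized state space), and that $V_{\boldsymbol{\theta}}=\min_j V_{\theta^j}$ is a pointwise minimum of finitely many affine maps, so it is itself Lipschitz in $\boldsymbol{\theta}$ with the same maximal constant. Since $L^i_{\mathcal{P}}$ and $L_{\mathcal{P}}$ carry no $\theta$-dependence, their $\theta$-Lipschitz constant is zero. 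For $L^i_V$ the bracketed temporal-difference residual is affine in $\boldsymbol{\theta}$, hence Lipschitz, and composing with the square $t\mapsto t^2$ preserves this on bounded sets; I would therefore use compactness of the state space together with boundedness of $f$ and of $V_{\boldsymbol{\theta}}$ over a compact parameter region to bound the residual uniformly, after which $t\mapsto t^2$ is Lipschitz. For $L_\Pi$ the residual is again affine in $\boldsymbol{\theta}$ and is multiplied by the $\theta$-independent, bounded factor $\log\Pi_\psi(a|x)$, so the product is Lipschitz in $\theta$.

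For the $\psi$-dependence I would note that $L^i_{\mathcal{P}}$ and $L_{\mathcal{P}}$ enter $\psi$ only through $\Pi_\psi$ in the transition term, that $L_\Pi$ enters through $\log\Pi_\psi(a|x)$, and that $L^i_V$ carries no $\psi$-dependence because its action is frozen at $a^i$. By Assumption~\ref{ACM2}, $\psi\mapsto\Pi_\psi(a|x)$ is $C^1$, hence Lipschitz on any compact parameter set, which yields the $\psi$-Lipschitz bound for $L^i_{\mathcal{P}}$ and $L_{\mathcal{P}}$ once combined with the smoothness of $p$ in its action argument. For $L_\Pi$ the extra step is to pass from $\Pi_\psi$ to $\log\Pi_\psi$: restricting, as in the definition, to actions with $\Pi_\psi(a|x)>0$ and using a uniform lower bound $\Pi_\psi(a|x)\ge c>0$ over the compact parameter region makes $t\mapsto\log t$ Lipschitz, so $\log\Pi_\psi$ is Lipschitz in $\psi$; multiplying by the bounded, $\psi$-independent residual gives the claim.

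Finally, I would assemble the three coordinate-wise bounds into joint Lipschitz continuity over a compact product domain, using that a function Lipschitz in each argument with constants uniform in the remaining bounded arguments is Lipschitz in the product metric. The main obstacle I anticipate is not the affine $\theta$-dependence but the two boundedness requirements that upgrade \emph{locally} Lipschitz to Lipschitz: controlling the temporal-difference residual so that the square in $L^i_V$ is genuinely Lipschitz, and securing the uniform positivity $\Pi_\psi(a|x)\ge c>0$ so that $\log\Pi_\psi$ is Lipschitz in $\psi$. Both hinge on working over compact state/action and parameter sets, consistent with the discretization into bins, so I would make these compactness and positivity hypotheses explicit where Lemma~\ref{lipmu} and Assumption~\ref{ACM2} leave them implicit.
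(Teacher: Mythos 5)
Your proposal is correct and follows essentially the same route as the paper, whose entire proof is the one-line reduction ``it directly follows from Proposition~\ref{lipmu} and the linear parametrization of the neural networks''; your $\mu$-part is exactly that reduction, your $\theta$-part is exactly the linear-parametrization argument, and your $\psi$-part supplies the appeal to Assumption~\ref{ACM2} that the paper leaves implicit. The added value in your write-up is that you make explicit the compactness of the parameter/state sets and the uniform lower bound $\Pi_\psi(a|x)\ge c>0$ needed to turn the squared TD residual and $\log\Pi_\psi$ into genuinely (not just locally) Lipschitz maps --- hypotheses the paper tacitly assumes but never states.
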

\begin{proof}
    It directly follows from Proposition \ref{lipmu} and the linear parametrization of the neural networks.
\end{proof}
\begin{proposition}\label{gase_mu}
        For a fixed $\psi$ and $\theta$, the last two ODEs in \eqref{MFC_system} have unique GASEs, denoted respectively by $\mu^{i*}_\psi$ and $\mu^*_\psi$ such that $  L^i_{\mathcal{P}}(\psi,\theta,\mu^{i*}_\psi)=0$ and $L_{\mathcal{P}}(\psi,\theta,\mu^*_\psi)=0$.
\end{proposition}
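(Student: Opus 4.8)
The plan is to recognize both measure ODEs as contraction flows toward the invariant distribution of the controlled chain, and to read off global asymptotic stability directly from the contractivity granted by Assumption~\ref{ACM1}. Writing $c := 1/(\epsilon\tilde\epsilon) > 0$ and introducing the one-step transition map $T_\psi(\mu)(y) := \int_x \mu(x)\, p(y\mid x,\Pi_\psi(a\mid x),\mu)\,dx$, the fourth ODE in~\eqref{MFC_system} reads $\dot\mu = -c\,L_{\mathcal{P}}(\psi,\boldsymbol\theta,\mu) = -c\,(\mu - T_\psi(\mu))$, and its equilibria are exactly the zeros of $L_{\mathcal{P}}$, i.e.\ the invariant distributions of the chain under $\Pi_\psi$. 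The same holds verbatim for $\mu^i$ with $T_\psi$ replaced by the map $T^i_\psi$ built from the bin-modified policy $\Pi^i_\psi$ of~\eqref{alphacontrol}; I treat the two cases together, noting only that one must first check $\Pi^i_\psi\in\mathcal{A}$ so that Assumption~\ref{ACM1} applies to it.

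The first step is existence and uniqueness of the equilibrium. I would show $T_\psi$ is a contraction in total variation by splitting, for probability measures $\mu,\nu$,
\[
T_\psi(\mu) - T_\psi(\nu) = \underbrace{\int_x \mu(x)\big[p(\cdot\mid x,\Pi_\psi,\mu) - p(\cdot\mid x,\Pi_\psi,\nu)\big]dx}_{\text{measure-dependence term}} + \underbrace{\int_x (\mu-\nu)(x)\,p(\cdot\mid x,\Pi_\psi,\nu)\,dx}_{\text{kernel-averaging term}}.
\]
The second part of Assumption~\ref{ACM1} bounds the first term in $d_{TV}$ by $\lambda\,d_{TV}(\mu,\nu)$, while the Dobrushin condition (first part of Assumption~\ref{ACM1}) is precisely the statement that the averaging by the fixed kernel $x\mapsto p(\cdot\mid x,\Pi_\psi,\nu)$ contracts total variation by the Dobrushin coefficient $(1-\alpha)$, bounding the second term by $(1-\alpha)\,d_{TV}(\mu,\nu)$. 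Hence $d_{TV}(T_\psi(\mu),T_\psi(\nu)) \le \kappa\, d_{TV}(\mu,\nu)$ with $\kappa := 1-\alpha+\lambda < 1$, since $\lambda\in[0,\alpha)$. Banach's fixed-point theorem then yields a unique $\mu^*_\psi$ (resp.\ $\mu^{i*}_\psi$) with $L_{\mathcal{P}}(\psi,\theta,\mu^*_\psi)=0$ (resp.\ $L^i_{\mathcal{P}}(\psi,\theta,\mu^{i*}_\psi)=0$).

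For global asymptotic stability I would use this contraction directly as a Lyapunov estimate. Setting $e(t) := \mu(t) - \mu^*_\psi$ and using $\mu^*_\psi = T_\psi(\mu^*_\psi)$, the ODE becomes $\dot e = -c\,e + c\big(T_\psi(\mu)-T_\psi(\mu^*_\psi)\big)$. Taking $V(t) := d_{TV}(\mu(t),\mu^*_\psi)$ and computing the upper Dini derivative along the flow gives $D^+V(t) \le -c\,V(t) + c\,\kappa\,V(t) = -c(\alpha-\lambda)\,V(t)$, so by the comparison lemma $V(t)\le V(0)\,e^{-c(\alpha-\lambda)t}\to 0$ from every initial measure. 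This is global exponential, hence global asymptotic, stability, identifying $\mu^*_\psi$ and $\mu^{i*}_\psi$ as the claimed unique GASEs; well-posedness of the flow itself follows from the Lipschitz regularity of $L_{\mathcal{P}}$ and $L^i_{\mathcal{P}}$ recorded in Proposition~\ref{mfclip}.

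The main obstacle I anticipate is not the contraction estimate but the functional-analytic bookkeeping in the infinite-dimensional space of measures: justifying the one-sided differentiation of the (non-smooth) total-variation norm via Dini derivatives, ensuring the measure-valued ODE admits a well-defined global flow along which the iterates remain probability measures, and verifying that the bin-modified policy $\Pi^i_\psi$ is admissible so that the uniform Dobrushin and Lipschitz bounds of Assumption~\ref{ACM1} are available for $T^i_\psi$. If one prefers to avoid Dini derivatives, an equivalent route is to iterate the discrete contraction and pass to the continuous-time limit, but the Lyapunov argument above is the cleanest.
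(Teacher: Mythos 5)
Your proposal is correct and follows essentially the same route as the paper: both identify $-L_{\mathcal{P}}(\psi,\boldsymbol{\theta},\mu)$ with $\mu\mathrm{P}^{\Pi_\psi,\mu}-\mu$ and derive global asymptotic stability from the strict contraction $d_{TV}(\mu\mathrm{P}^{\Pi_\psi,\mu},\nu\mathrm{P}^{\Pi_\psi,\nu})\le(1-\alpha+\lambda)\,d_{TV}(\mu,\nu)$ supplied by Assumption~\ref{ACM1}. The only difference is that you spell out what the paper leaves implicit --- the two-term decomposition justifying the contraction constant and the Dini-derivative Lyapunov estimate converting the contraction of the map into exponential stability of the flow, where the paper instead invokes the contraction mapping theorem via a citation.
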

\begin{proof}
We denote by $\mathrm{P}^{{\pi},{\mu}}$ the transition kernel according to the distribution ${\mu}$ on $\mathcal{X}$ and the policy ${{\pi}}\in \mathcal{A}$,   defined for any  distribution $\tilde\mu$ by:
\begin{equation}
\label{eq:def-Ptransitions}
    ( \tilde\mu\mathrm{P}^{\pi,\mu})(x) = \int_{x'\in \mathcal{X}} \tilde\mu(x') \int_{a\in \mathcal{A}} \pi(a|x') p(x|x',a,\mu), \qquad x \in \mathcal{X}.
\end{equation}
    We claim that $-L^i_{\mathcal{P}}(\psi,\boldsymbol{\theta},\mu) = \mu\mathrm{P}^{\Pi^i_\psi,\mu} - \mu $ and $-L_{\mathcal{P}}(\psi,\boldsymbol{\theta},\mu) = \mu\mathrm{P}^{\Pi_\psi,\mu} - \mu$. By Assumption \ref{ACM1}, one can easily check that $ \|\mu\mathrm{P}^{\Pi_\psi,\mu}-\nu\mathrm{P}^{\Pi_\psi,\nu}\|_1 \leq (1-\alpha+\lambda)\|\mu-\nu\|_1$, where $1-\alpha+\lambda < 1$ implies that $\mu \mapsto \mathrm{P}^{\Pi_\psi, \mu}\mu$ is a strict contraction and $\mu \mapsto \mathrm{P}^{\Pi^i_\psi, \mu}\mu$ is a strict contraction as well. As a result, by contraction mapping theorem \cite{SELL197342}, a unique GASE exists.
\end{proof}

Consequently, we promptly obtain the following result.
\begin{proposition}\label{MFC_MuGASE}
    Under Assumptions \ref{ACM1}, \ref{ACM2}, \ref{ACM3},  \ref{squaresumlearningrate_mfc} and \ref{MFC_MuGASE}, for fixed $\psi$, as $n\to\infty$, $\mu^i_n \to \mu^{i*}_\psi$ and $\mu_n \to \mu^*_\psi$ where $\mu^{i*}_\psi$ and $\mu^*_\psi$ are the GASE mentioned in Assumption \ref{MFC_MuGASE}.
\end{proposition}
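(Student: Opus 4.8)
The plan is to treat this as the fastest-timescale subsystem of the multi-timescale scheme in~\eqref{MFC_system} and to mirror the argument behind Proposition~\ref{MFG_ACGASEPRO}, with the roles of the fast and slow blocks exchanged: here the bin measures $\{\mu^i\}$ and the individual-path measure $\mu$ evolve on the fastest rate $\rho^\mu_n$, while $\psi$ and $\boldsymbol{\theta}$ move more slowly.

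First I would record the structural observation that the measure losses $L^i_{\mathcal{P}}$ and $L_{\mathcal{P}}$ in~\eqref{mfceq} depend only on $(\psi,\mu)$ and not on $\boldsymbol{\theta}$. Consequently, with $\psi$ frozen, the update $\mu^i_{n+1}=\mu^i_n-\rho^\mu_n L^i_{\mathcal{P}}(\psi,\boldsymbol{\theta_n},\mu^i_n)$ is an autonomous Euler discretization of the fastest ODE $\dot\mu^i=-L^i_{\mathcal{P}}(\psi,\boldsymbol{\theta},\mu^i)$, decoupled from the intermediate critic dynamics, and likewise for $\mu$. This decoupling is exactly what licenses a conclusion phrased solely in terms of the frozen $\psi$, and it collapses the apparent three-timescale problem ($\mu$ fast, $\boldsymbol{\theta}$ intermediate, $\psi$ slow) into a genuine two-timescale one ($\mu$ fast, $\psi$ slow).

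Next I would assemble the hypotheses of the two-timescale framework. Proposition~\ref{gase_mu} supplies, for each fixed $\psi$, the unique GASEs $\mu^{i*}_\psi$ and $\mu^*_\psi$ of the fastest ODEs via the $\ell^1$-contraction with constant $1-\alpha+\lambda<1$; Proposition~\ref{mfclip} furnishes the Lipschitz regularity of the driving fields; and Assumption~\ref{squaresumlearningrate_mfc} gives the Robbins--Monro summability together with the scale separation $\rho^V_n/\rho^\mu_n\to0$ that renders $\psi$ (and $\boldsymbol{\theta}$) quasi-static relative to the measure iterates. With these in hand I would invoke Theorem~1 of \cite{NIPS2007_6883966f} and Theorem~3 of \cite{NIPS1999_6449f44a}, exactly as in Proposition~\ref{MFG_ACGASEPRO} (the martingale noise terms being absent in the present idealized deterministic system), to obtain $\mu^i_n\to\mu^{i*}_\psi$ and $\mu_n\to\mu^*_\psi$.

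The hard part will be the multi-timescale bookkeeping rather than any single estimate: one must confirm that the critic parameters $\boldsymbol{\theta}$, sitting at the intermediate rate $\rho^V_n$, genuinely do not feed back into the measure subsystem before the cited two-timescale theorems can be applied verbatim. The $\boldsymbol{\theta}$-independence of $L^i_{\mathcal{P}}$ and $L_{\mathcal{P}}$ resolves this, but it is the step that needs to be stated explicitly so that the fast block is seen to depend only on the single slow variable $\psi$, after which the two parallel measure families (the bins $\{\mu^i\}$ and the individual path $\mu$) are handled independently since each is self-contained.
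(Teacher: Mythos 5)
Your overall architecture matches the paper's: Proposition~\ref{gase_mu} gives, for each frozen $\psi$, a unique GASE of the measure ODEs via the $\ell^1$-contraction $1-\alpha+\lambda<1$, and the Robbins--Monro conditions in Assumption~\ref{squaresumlearningrate_mfc} together with the Lipschitz regularity from Proposition~\ref{mfclip} then let the iterates track that ODE to its equilibrium. Your explicit observation that $L^i_{\mathcal{P}}$ and $L_{\mathcal{P}}$ do not depend on $\boldsymbol{\theta}$, so that the fast block is autonomous in $(\psi,\mu)$ and the conclusion can legitimately be indexed by $\psi$ alone, is correct and is a point the paper leaves implicit.

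The one substantive discrepancy is the final citation. You propose to close the argument by invoking Theorem~1 of \cite{NIPS2007_6883966f} and Theorem~3 of \cite{NIPS1999_6449f44a} ``exactly as in Proposition~\ref{MFG_ACGASEPRO}.'' Those are actor-critic convergence results (policy-gradient plus TD-critic with linear function approximation); they say nothing about a plain stochastic-approximation recursion for a probability measure, and they would not yield $\mu^i_n\to\mu^{i*}_\psi$. The paper instead concludes via the ODE method for stochastic approximation, \cite[Theorem~3.1]{563625}, which is the correct tool here: a Robbins--Monro iteration whose limiting ODE has a unique GASE converges to that GASE. Your own description of the iteration as an Euler discretization of a contractive flow shows you have the right idea; you only need to point it at the Borkar--Meyn-type theorem rather than at the actor-critic theorems, which are reserved in this paper for the $(\theta,\psi)$ block.
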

\begin{proof}
     In Proposition \ref{gase_mu}, we proved the unique GASE exists and then by \cite[Theorem~3.1]{563625}, it is the limit of $\mu^i_t$ and $\mu_t$ denoted by ${\mu}^{*i}_\psi$ and $\mu^*_\psi$ for all bins and the individual path.
\end{proof}

Next, we consider the convergence of actor-critic network and then obtain the unique control.
\begin{assumption}\label{secondGASE_mfc}
    The first two ODE in \eqref{MFC_system} has a unique GASE, denoted by $\psi^*$ and $\boldsymbol{\theta}^*$ such that $\nabla_\theta L^i_V(\psi^*,\boldsymbol{\theta}^*,\mu^{i*}_{\psi^*})=0$ and $\nabla_\psi L_\Pi(\psi^*,\boldsymbol{\theta}^*,\mu^*_{\psi^*})=0$.
\end{assumption}
Consequently we obtain the main result.
\begin{theorem}\label{mfc_converge}
    Under Assumptions \ref{ACM1}, \ref{ACM2}, \ref{ACM3}, \ref{squaresumlearningrate_mfc}, \ref{secondGASE_mfc}, and result in Propositions \ref{mfclip} and \ref{MFC_MuGASE} we have  $\mu^i_n\to\mu^{i*}_{\psi^*}$, $\mu_n\to\mu^*_{\psi^*}$, $\boldsymbol{\theta}_n \to \boldsymbol{\theta}^*$, and $\psi_n \to \psi^*$, as $n\to\infty$, and $(\mu^*_{\psi^*}, \Pi_{\psi^*})$ forms an MFC solution.
\end{theorem}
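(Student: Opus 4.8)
The plan is to reverse the timescale ordering of the MFG argument (Propositions~\ref{MFG_ACGASEPRO}--\ref{mfg_converge}) so that, as prescribed by Assumption~\ref{squaresumlearningrate_mfc}, the measures equilibrate fastest, the critic next, and the actor slowest, and then to apply a three-timescale extension of the Borkar framework~\cite{Borkar97}. On the fastest layer, for frozen $(\psi,\boldsymbol\theta)$ Proposition~\ref{MFC_MuGASE} already gives $\mu^i_n\to\mu^{i*}_\psi$ and $\mu_n\to\mu^*_\psi$, these being the unique fixed points of the strict contractions $\mu\mapsto\mu\mathrm{P}^{\Pi^i_\psi,\mu}$ and $\mu\mapsto\mu\mathrm{P}^{\Pi_\psi,\mu}$ established in Proposition~\ref{gase_mu}. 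The uniform contraction constant $1-\alpha+\lambda<1$ together with Proposition~\ref{mfclip} makes the maps $\psi\mapsto\mu^{i*}_\psi$ and $\psi\mapsto\mu^*_\psi$ Lipschitz, which is exactly what is needed to keep the slower layers well-posed.

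Next I would substitute the equilibrated measures into the critic ODE $\dot\theta^i=-\nabla_\theta L^i_V(\psi,\boldsymbol\theta,\mu^{i*}_\psi)$; the linear parametrization $V_\theta=\Phi^\top\theta$ and the full-rank condition of Assumption~\ref{ACM3} make $\nabla_\theta L^i_V=0$ uniquely solvable, and Assumption~\ref{secondGASE_mfc} guarantees the critic relaxes to $\boldsymbol\theta^*$. On the slowest layer, with measures and critic equilibrated, Assumption~\ref{secondGASE_mfc} gives $\psi_n\to\psi^*$. Gluing the three layers through the iterated two-timescale theorem yields the joint convergence $\mu^i_n\to\mu^{i*}_{\psi^*}$, $\mu_n\to\mu^*_{\psi^*}$, $\boldsymbol\theta_n\to\boldsymbol\theta^*$, and $\psi_n\to\psi^*$.

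To identify the limit, I would set $\nabla_\theta L^i_V(\psi^*,\boldsymbol\theta^*,\mu^{i*}_{\psi^*})=0$ and use Assumption~\ref{ACM3} to strip the features, obtaining for each $x\in\proj_{\mathcal X}B^i$
$$
V_{\theta^{i*}}(x)=f(x,a^i,\mu^{i*}_{\psi^*})+e^{-\beta}\int_{x'}p(x'|x,a^i,\mu^{i*}_{\psi^*})\,V_{\boldsymbol\theta^*}(x'),
$$
and then taking the minimum over the bins covering $x$ recovers the discretized MFC Bellman equation. The crucial point is that each $\mu^{i*}$ is the stationary law of the deviated policy $\Pi^i_\psi$ from \eqref{alphacontrol}, so $V_{\theta^{i*}}$ already encodes the distributional response to playing $a^i$ on $\proj_{\mathcal X}B^i$; this is precisely what separates the MFC (social) optimum from the MFG (Nash) fixed point, and it is a direct consequence of placing the measures on the fast timescale. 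Combining the Bellman characterization with $\nabla_\psi L_\Pi(\psi^*,\boldsymbol\theta^*,\mu^*_{\psi^*})=0$ and the stationarity $L_{\mathcal P}(\psi^*,\boldsymbol\theta^*,\mu^*_{\psi^*})=0$, so that $\mu^*_{\psi^*}$ is the limiting law of the controlled process under $\Pi_{\psi^*}$, shows that $(\mu^*_{\psi^*},\Pi_{\psi^*})$ solves the MFC problem of Section~\ref{sec:mfcdef}.

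The main obstacle I anticipate is the non-smoothness of $V_{\boldsymbol\theta}(x)=\min_j V_{\theta^j}(x)$, which enters both loss functions and renders the right-hand sides of \eqref{MFC_system} only piecewise smooth, whereas the Borkar and actor-critic convergence theorems assume Lipschitz vector fields. I would handle this by observing that a pointwise minimum of finitely many functions linear in $\theta$ is globally Lipschitz and piecewise linear, so the Lipschitz hypotheses survive, while the GASE hypotheses (Assumption~\ref{secondGASE_mfc} and Proposition~\ref{MFC_MuGASE}) are imposed exactly to sidestep the differentiability gap at the switching boundaries --- if needed, by passing to a differential-inclusion formulation and using that the non-differentiability set is negligible along typical trajectories. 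A secondary, and I believe separable, issue is the discretization error between the binned Bellman fixed point and the true continuous MFC optimum, which concerns the mesh $P_{\mathcal X},P_{\mathcal A}$ rather than the convergence of the iteration and therefore lies outside the present statement.
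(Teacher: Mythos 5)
Your convergence argument --- three nested timescales with the measures equilibrating fastest via Proposition~\ref{MFC_MuGASE} and the contraction of Proposition~\ref{gase_mu}, then the critic, then the actor, glued by the Borkar framework --- is essentially the paper's, which verifies the hypotheses of \cite[Theorem~1.1]{Borkar97} by citing the same combination of results; your observation that the pointwise minimum of linearly parametrized critics stays globally Lipschitz is a sensible supplement that the paper does not spell out. The genuine gap is in the identification step. You correctly derive the stationarity system (the bin-wise Bellman equations for $V_{\theta^{i*}}$ under the deviated policies $\Pi^i_{\psi^*}$ with their own stationary measures $\mu^{i*}_{\psi^*}$, together with $\nabla_\psi L_\Pi(\psi^*,\boldsymbol{\theta}^*,\mu^*_{\psi^*})=0$ and $L_{\mathcal P}(\psi^*,\boldsymbol{\theta}^*,\mu^*_{\psi^*})=0$), but then assert that ``combining'' these ``shows that $(\mu^*_{\psi^*},\Pi_{\psi^*})$ solves the MFC problem.'' That is precisely the step that requires an argument: first-order stationarity of the actor loss plus a family of fixed-point equations does not by itself yield optimality over all admissible policies. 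The paper closes this by lifting to the mean-field MDP over distributions --- the integrated reward $\hat r(\mu,\pi)$ and the associated Q-function of \cite{doi:10.1287/opre.2022.2395} --- reading each bin's equation as the Bellman relation $Q^{\Pi_{\psi^*}}(\mu^{\Pi^i_{\psi^*}},\Pi^i_{\psi^*})=\hat r(\mu^{\Pi^i_{\psi^*}},\Pi^i_{\psi^*})+\gamma Q^{\Pi_{\psi^*}}(\mu^{\Pi^i_{\psi^*}},\Pi_{\psi^*})$ for the one-step deviation to $a^i$ on $\proj_{\mathcal X}B^i$, letting the number of bins tend to infinity to conclude that $\Pi_{\psi^*}$ is (locally) optimal against all such deviations, and finally invoking an assumed uniqueness of the MFC solution to identify the limit.

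Relatedly, your closing remark that the discretization error between the binned fixed point and the continuous MFC optimum ``lies outside the present statement'' is inconsistent with the paper's own proof: the passage to infinitely many bins is exactly how the paper extracts optimality of $\Pi_{\psi^*}$ from the bin-wise equations, so mesh refinement is an essential ingredient of the identification rather than a separable post-processing issue. As written, your argument establishes convergence to a stationary point of the three-timescale dynamics but does not establish that the limit is an MFC solution.
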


\begin{proof}
    With Assumptions \ref{ACM1}, \ref{ACM2}, \ref{ACM3}, \ref{squaresumlearningrate_mfc}, \ref{secondGASE_mfc},  and the results of Propositions \ref{mfclip} and \ref{MFC_MuGASE}, we can apply Theorem~1 in \cite{NIPS2007_6883966f} and Theorem~3 in \cite{NIPS1999_6449f44a}. Hence, the assumptions of \cite[Theorem~1.1]{Borkar97} are satisfied and, therefore, this result guarantees the convergence in the statement.

Next, we want to characterize this limit and prove that it forms an MFC solution. We know the limiting point satisfies the following equations. To make the notation clearer, we use $V_{\theta^*}^i$ as a new notation for $V_{{\theta^*}^i}$ defined in \eqref{mfceq}.

\begin{align*}
    \begin{cases}
\mu^{i*}_{\psi^*}(y) = \int \mu^{i*}_{\psi^*}(x)p(y|x,\Pi^i_{\psi^*}(a|x),\mu^{i*}_{\psi^*}),\\
        V_{\theta^*}^i(x) = f(x,a^i,\mu^{i*}_{\psi^*}) + \gamma\int_{x'}p(x'|x,a^i,\mu^{i*}_{\psi^*})V_{\boldsymbol{\theta}}(x')\quad \text{where}\quad x\in \proj_{\mathcal{X}}B^i,\\
\mu^*_{\psi^*}(y) = \int\mu^*_{\psi^*}(x)p(y|x',\Pi_{\psi^*}(a|x),\mu^*_{\psi^*}),\\
V_{\boldsymbol{\theta}}(x) = f(x,\Pi_{\psi^*}(a|x),\mu^*_{\psi^*}) + \gamma\int_{x'}p(x'|x,\Pi_{\psi^*}(a|x),\mu^*_{\psi^*})V_{\boldsymbol{\theta}}(x').
    \end{cases}
\end{align*}
Next we will prove our system of value functions satisfy the Bellman equation established in \cite{doi:10.1287/opre.2022.2395}. Recall the integrated averaged reward function $\hat{r}$ is defined as follows,
\[
    \hat{r}(\mu,\pi) = \int_{s} \mu(ds)\int_a \pi(a|s)(da) r(s,a,\mu).
\]
In short, it can be rewritten as 
\[
    \hat{r}(\mu,\pi) = \int_{s} \mu(ds) r(s,\pi(a|s),\mu).
\]
Then, using this integrated averaged reward function, we can define the integrated Q function and obtain the Bellman equation through dynamic programming principle. As for any distribution $\mu$ and policy $\pi$,
\[
Q^{\pi^*}(\mu,\pi) = \hat{r}(\mu,\pi) + \gamma Q^{\pi^*}(\Phi(\mu,\pi),\pi^*)
\]
where $Q^{\pi^*} = \sup_{\pi'} Q^{\pi'}$. If we consider this process starts directly from its invariant distribution, we will get,
\[
Q^{\pi^*}(\mu^\pi,\pi) = \hat{r}(\mu^\pi,\pi) + \gamma Q^{\pi^*}(\mu^\pi,\pi^*).
\]

Under our scenario, we use cost function instead of reward, and we use the invariant distribution to calculate the cost.  In other word, we consider the following $\hat{r}$,
\[
    \hat{r}(\mu^\pi,\pi) = \int_{s} \mu^\pi(ds) f(s,\pi(a|s),\mu^\pi).
\]
Consequently if we let $\pi = \Pi^i_{\psi^*}$, we get the following result,
\[
    v(\mu^{\Pi^i_{\psi^*}}) = Q^{\Pi_{\psi^*}}(\mu^{\Pi^i_{\psi^*}},\Pi^i_{\psi^*}).
\]
Eventually, from our algorithm, if we consider not a fixed initial point but an initial point coming from its invariant distribution, we will get 
\[
Q^{\Pi_{\psi^*}}(\mu^{\Pi^i_{\psi^*}},\Pi^i_{\psi^*}) = \hat{r}(\mu^{\Pi^i_{\psi^*}},\Pi^i_{\psi^*}) + \gamma Q^{\Pi_{\psi^*}}(\mu^{\Pi^i_{\psi^*}},\Pi_{\psi^*})
\]
for all bins. As the number of bins increases to infinity, we can conclude that $\Pi_{\psi^*}$ is the best policy locally. By the assumption of uniqueness of the MFC solution, we claim that the policy $\Pi_{\psi^*}$ and the distribution $\mu^{\Pi_{\psi^*}}$ (which is the same as $\mu^*_{\psi^*}$) form an MFC solution.
\end{proof}

\section{Synchronous and Asynchronous Algorithms with Stochastic Approximation}\label{sec:fullalgo}

In the previous section, we introduced an idealized deterministic algorithm relying on expectations that we assumed we could compute perfectly. However, in practical situations, those expectations are unknown and instead we use samples. Then we can use the classical  theory of stochastic approximation to obtain convergence of algorithms with sample-based updates. Let us assume that for any $(x,a,\mu)$, the learner can know the value $f(x,a,\mu)$. Furthermore, the learner can sample a realization of the random variable
\[
    X'_{x,a,\mu}\sim p(\cdot|x,a,\mu).
\]
Then, the learner has access to realizations of the random variables $\widecheck{L}_{\mathcal{P}}(\psi,\theta,\mu)$, $\widecheck{L}^i_{\mathcal{P}}(\psi,\theta,\mu)(x')$ and $\widecheck{ L}_{\Pi}(\psi,\theta,\mu)(x,a)$ defined as follows:
\begin{align*}
    \begin{cases}
        \widecheck{L}_{\mathcal{P}}(\psi,\theta,\mu)(x') = \left(\mathbbm{\delta}_{\{X^{'}_{x,\Pi_\psi,\mu}=x'\}}-\mu(x')\right)_{x'\in\mathcal{X}}\\
        \widecheck{L}^i_{\mathcal{P}}(\psi,\theta,\mu)(x') = \left(\mathbbm{\delta}_{\{X^{'}_{x,\Pi^i_\psi,\mu}=x'\}}-\mu(x')\right)_{x'\in\mathcal{X}}\\
        \widecheck{ L}_{\Pi}(\psi,\theta,\mu)(x,a) = -\left[f(x,a,\mu) + \gamma V_{\boldsymbol{\theta}}(X'_{x,a,\mu})-V_{\boldsymbol{\theta}}(x)\right]\log\Pi_\psi(a|x).
    \end{cases}
\end{align*}
By \cite[Section 6]{Andrea23}, we can get that
\[
\mathbbm{E}[\widecheck{L}_{\mathcal{P}}(\psi,\theta,\mu)(x')] = L_{\mathcal{P}}(\psi,\theta,\mu)(x').
\]
Similarly we can check that
\begin{align*}
    \begin{cases}
        \mathbbm{E}[\widecheck{L}^i_{\mathcal{P}}(\psi,\theta,\mu)(x')] = L^i_{\mathcal{P}}(\psi,\theta,\mu)(x')
        \\
        \mathbbm{E}[\nabla_\psi\widecheck{ L}_{\Pi}(\psi,\theta,\mu)(x,a)] =\nabla_\psi L_{\Pi}(\psi,\theta,\mu)(x,a)
        \\
        \mathbbm{E}[\widecheck{ L}_{\Pi}(\psi,\theta,\mu)(x,a)] = L_{\Pi}(\psi,\theta,\mu)(x,a)
        \\
         \mathbbm{E}[\nabla_\theta\widecheck{ L}_{V}(\psi,\theta,\mu)(x)] =\nabla_\theta L_{V}(\psi,\theta,\mu)(x).
    \end{cases}
\end{align*}
Now we can define the synchronous algorithm for MFG with stochastic approximation:
\begin{align}
    \begin{cases}
        \mu_{n+1} = \mu_n - \rho_n^{\mu}\widecheck{L}_{\mathcal{P}}(\psi_n,\theta_n,\mu_n)+\rho_n^{\mu}M^{\mathcal{P}}_n,\\
        \theta_{n+1} = \theta_n - \rho_n^V \nabla_\theta L_V(\psi_n,\theta_n,\mu_n)+\rho_n^{V}M^{V}_n,\\
        \psi_{n+1} = \psi_n - \rho^\Pi_n \nabla_\psi \widecheck{ L}_{\Pi} (\psi_n,\theta_n,\mu_n) + \rho^\Pi_n M^{\Pi}_n ,
    \end{cases}
\end{align}
where $M^{\mathcal{P}}_n$, $M^{V}_n$ and $M^{\Pi}_n$ are martingales defined as follows,
\begin{align}
    \begin{cases}
        M^{\mathcal{P}}_n = \widecheck{L}_{\mathcal{P}}(\psi_n,\theta_n,\mu_n) - L_{\mathcal{P}}(\psi_n,\theta_n,\mu_n)\\
        M^{V}_n = \nabla_\theta\widecheck{ L}_{V}(\psi,\theta,\mu) - \nabla_\theta L_{V}(\psi,\theta,\mu)\\
        M^{\Pi}_n = \nabla_\psi \widecheck{ L}_{\Pi} (\psi_n,\theta_n,\mu_n) - \nabla_\psi L_{\Pi} (\psi_n,\theta_n,\mu_n).
    \end{cases}
\end{align}

In a similar way, the synchronous algorithm for MFC with stochastic approximation is defined as follows:
\begin{align}
    \begin{cases}
        \mu^i_{n+1} = \mu^i_n - \rho_n^{\mu}\widecheck{L}^i_{\mathcal{P}}(\psi_n,\boldsymbol{\theta_n},\mu^i_n)+\rho_n^{\mu}M^{\mathcal{\tilde P}}_n,\\
        \mu_{n+1} = \mu_n - \rho_n^{\mu}\widecheck{L}_{\mathcal{P}}(\psi_n,\boldsymbol{\theta_n},\mu_n)+\rho_n^{\mu}M^{\mathcal{P}}_n,\\
        \theta^i_{n+1} = \theta^i_n - \rho_n^V \nabla_{\theta^i} L^i_V(\psi_n,\boldsymbol{\theta_n},\mu^i_n),\\
        \psi_{n+1} = \psi_n - \rho^\Pi_n \nabla_\psi \widecheck{ L}_{\Pi} (\psi_n,\boldsymbol{\theta_n},\mu_n)+ \rho^\Pi_n M^{\Pi}_n.
    \end{cases}
\end{align}

Following the argument in \cite[Section 7]{Andrea23}, we can extend the algorithm under the asynchronous setting and their Theorem 7.6 (MFG) and Theorem 7.7 (MFC) guarantee the convergence results as in  Proposition~\ref{mfg_converge}  (MFG)  and Theorem~\ref{mfc_converge} (MFC) but with asynchronous feature described in Algorithms \ref{algoMFG} and \ref{algo:MFC}. We omit the details as they are very similar to those presented in \cite{Andrea23} for Q-learning and do not depend fundamentally on the differences between Q-learning and Actor-Critic learning.
 
\section{Numerical Results}\label{sec:Num}

We use linear quadratic example since there exists an analytic solution in continuous time. In the implementation, we use discrete time based on a suitable discretization of the problem.

\subsection{A Linear-Quadratic Benchmark}\label{sec:Num1d}
We begin by evaluating the performance of our algorithm on the benchmark example used in \cite{angiuli2023deep}. Note that our algorithms do not use score functions and are different for the MFG and MFC cases. The problem consists in optimizing 
\[
	\mathbbm{E}\left[\int_0^\infty e^{-\beta t}\left(\frac{1}{2}\alpha_t^2+c_1(X_t-c_2m)^2+c_3(X_t-c4)^2+c_5m^2\right)\mathrm{d}t\right]
\]
where $m=\int x\mu(\mathrm{d}x)$ and the state dynamics are: 
\[
	\mathrm{d}X_t = \alpha_t \mathrm{d}t+\sigma\mathrm{d}W_t,\quad t\in[0,\infty   ).
\] 

\subsubsection{Solution for Asymptotic Mean Field Game}

Traditional methods  for deriving the LQ problem solution begin with recovering the value function (details can be found for example in \cite{Andrea20})
\[
	v(x) := \inf_{\alpha \in \mathbbm{A}}\mathbbm{E}\left[\int_0^\infty e^{-\beta t}\left(\frac{1}{2}\alpha_t^2+c_1(X_t-c_2m)^2+c_3(X_t-c4)^2+c_5m^2\right)\mathrm{d}t|X_0=x\right],
\]
as the solution of a Hamilton-Jacobi-Bellman equation. In the MFG case, we denote the optimal value function as $\hat{v}$ and an expilict form is given by $\hat{v}(x) = \hat{\Gamma}_2x^2+\hat{\Gamma}_1x + \hat{\Gamma}_0$, where
\begin{align*}
\begin{cases}
	\hat\Gamma_2 = \frac{-\beta+\sqrt{\beta^2+8(c_1+c_3)}}{4}\\
	\hat{\Gamma}_1=-\frac{2\hat\Gamma_2c_3c_4}{\hat\Gamma_2(\beta+2\hat\Gamma_2)-c_1c_2}\\
	\hat\Gamma_0 = \frac{c_5\hat{m}^2+c_3c_4^2+c_1c_2^2\hat{m}^2+\sigma^2\hat\Gamma_2-\frac{1}{2}\hat{\Gamma}_1^2}{\beta}.
\end{cases}
\end{align*}
Then the optimal control for the MFG is 
\begin{align*}
    \hat{x}=-\hat{v}'(x) = -(2\hat{\Gamma}_2x+\hat{\Gamma}_1),
\end{align*}
and the limiting distribution of the corresponding process is 
\begin{align*}
    \hat\mu = \mathcal{N}\left(-\frac{\hat{\Gamma}_1}{2\hat{\Gamma}_2},\frac{\sigma^2}{4\hat{\Gamma}_2}\right).
\end{align*}
\subsubsection{Solution for Asymptotic Mean Field Control}
Similarly as above, we denote the MFC value function by $v^*$. It has the form $v^*(x)=\Gamma^*_2x^2+\Gamma^*_1x+\Gamma_0^*$ with 
\begin{align*}
    \begin{cases}
        \Gamma^*_2 = \frac{-\beta+\sqrt{\beta^2+8(c_1+c_3)}}{4}\\
{\Gamma}^*_1=-\frac{2\Gamma^*_2c_3c_4}{\Gamma^*_2(\beta+2\Gamma^*_2)+c_5-c_1c_2(2-c_2)}\\
\Gamma^*_0 = \frac{c_5{m^*}^2+c_3c_4^2+c_1c_2^2{m^*}^2+\sigma^2\Gamma^*_2-\frac{1}{2}{\Gamma^*_1}^2}{\beta}.
    \end{cases}
\end{align*}
The optimal control for the MFC is 
\begin{align*}
    \alpha^*(x) = -{v^*}' (x) = -(2\Gamma^*_2x+\Gamma_1^*),
\end{align*}
and the limiting distribution of the corresponding process is 
\begin{align*}
    \mu^*=\mathcal{N}\left(-\frac{\Gamma^*_1}{2\Gamma^*_2},\frac{\sigma^2}{4\Gamma^*_2}\right).
\end{align*}

\subsubsection{Numerical Details}\label{sec:numerics}

We proceed with the same choice of parameters as in \cite{angiuli2023deep} for comparison.

For our numerical experiment, we evaluated our algorithm using two distinct sets of values for the running cost coefficients $c_1$ through $c_5$ and the volatility $\sigma$, as detailed in Tables 2 and 3. In both cases, the discount factor is fixed at $\beta =1$, and we discretized continuous time with a step size of $\Delta t = 0.01$ ($\gamma = e^{-\beta\Delta t}$). Both the critic and score functions are implemented as feedforward neural networks featuring a hidden layer of 128 neurons with tanh activation. Similarly, the actor is designed as a feedforward neural network that outputs the mean and standard deviation of a normal distribution used for sampling actions. Its architecture consists of a shared hidden layer of 64 neurons with tanh activation, followed by two separate layers (each with 64 neurons) dedicated to computing the mean and standard deviation. To ensure that the standard deviation remains positive, its corresponding layer employs a softmax activation. As the actor is intended to converge to a deterministic or pure policy over time, we add a baseline value of $10^{-5}$ to the output layer to maintain a minimal level of exploration, effectively mirroring the entropy regularization approach described in \cite{wang2020reinforcement}. For further details, refer to Table~\ref{table:1} for the learning rates. 
We use  uniform distributions $\mu^i$ over sub-state space corresponding to each bin and take 42 bins in total, 6 under state space and 7 under action space.
The results for the following set of parameters are shown in Figures \ref{fig:1} and \ref{fig:2}.

\begin{table}[h!]
    \centering
    \begin{tabular}{ |p{1cm}|p{1.5cm}|p{1.5cm}|  }
\hline
& MFG &MFC \\
\hline
$\rho^\pi_0$ & $5\times 10^{-5}$ &$5\times 10^{-5}$ \\
$\rho^V_0$ & $10^{-4}$   & $10^{-4}$ \\
$\rho^\mu_0$ &$10^{-5}$ & $10^{-3}$ \\
\hline
\end{tabular}
    \caption{Choice of learning rate for MFG and MFC}
    \label{table:1}
\end{table}

\begin{table}[h!]
    \centering
    \begin{tabular}{ |p{0.7cm}|p{0.7cm}|p{0.7cm}|p{0.7cm}|p{0.7cm}|p{0.7cm}|  }
\hline
c1&c2&c3&c4&c5&$\sigma$ \\
\hline
$0.25$ & $1.5$&$0.5$&$0.6$&$1.0$&$0.3$ \\
\hline
\end{tabular}
    \caption{Running cost coefficients and volatility}
    \label{table:2}
\end{table}

\begin{figure}%
\center 
\subfloat[Result for MFG]
{\includegraphics[width=0.45\textwidth]{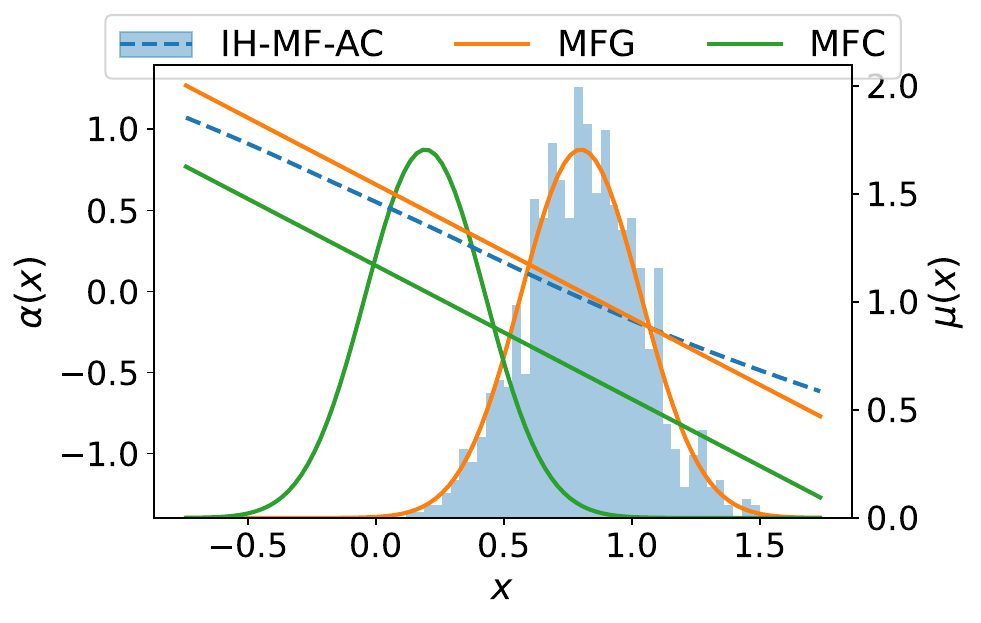}\label{fig:Ex_Im}}
\subfloat[Result for MFC]{\includegraphics[width=0.45\textwidth]{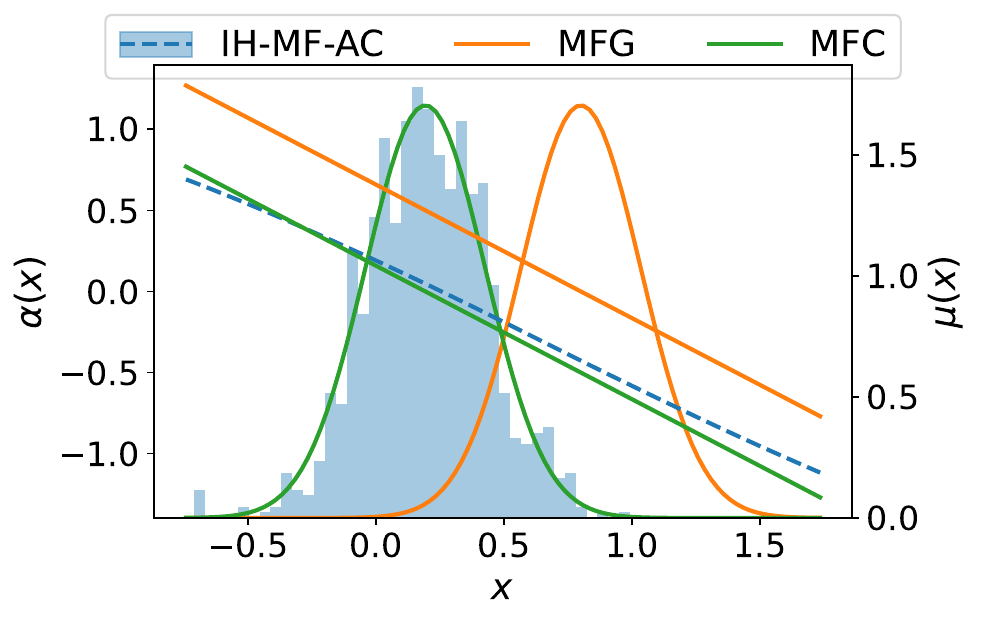}\label{fig:Ex_Im2}}
\caption{The histogram is the learned asymptotic distribution and the dashed line is the learned feedback control after $N=2\times10^5$ iterations. The green curves correspond to the optimal control and mean field distribution for MFC, while the orange curves are the equivalent for MFG. The bottom axis shows the state variable $x$, the left axis refers to the value of the control $\alpha(x)$, and the right axis represents the probability density for $\mu$.}
\label{fig:1}
\end{figure}

\begin{figure}%
\center 
\subfloat[Result for MFG]
{\includegraphics[width=0.45\textwidth]{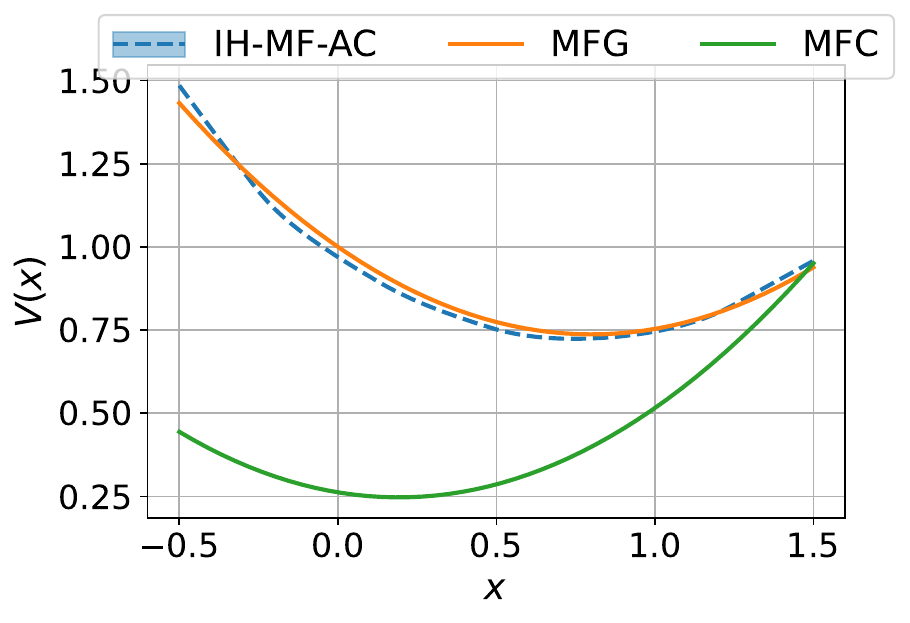}\label{fig:Ex_Im3}}
\subfloat[Result for MFC]{\includegraphics[width=0.45\textwidth]{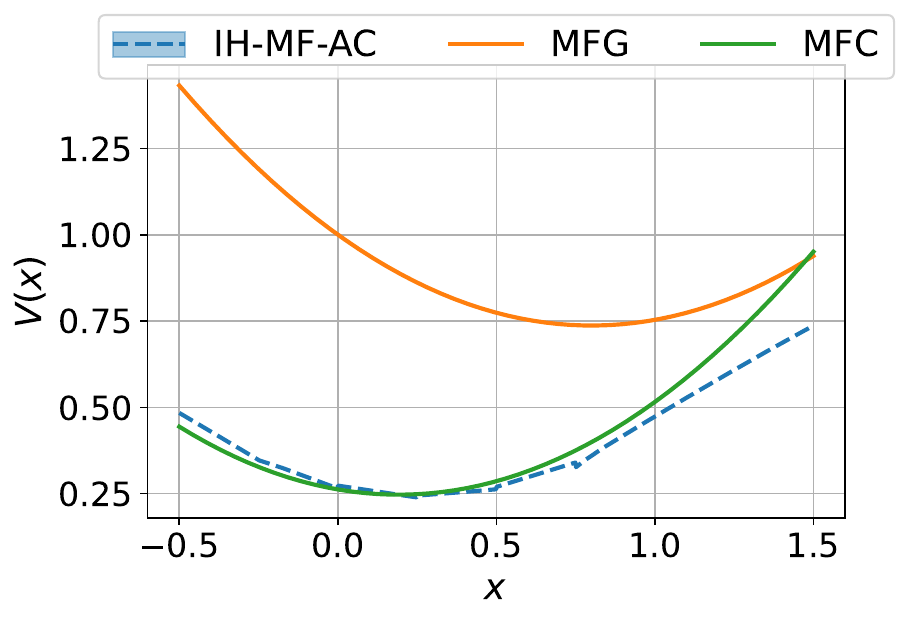}\label{fig:Ex_Im4}}
\caption{The orange and green curves are the optimal value functions for the MFG and MFC problem, respectively. The blue dashed line is the learned value function among all bins after $N=2\times10^5$ iterations. It is a poly-line for MFC, since we use bins}
\label{fig:2}
\end{figure}
In the plots in Figures \ref{fig:1} and \ref{fig:2}, we show the learned distributions, the learned value functions, and the learned policies for MFG and MFC. We also show the theoretical ones to identify the correctness of the learning results.

Furthermore, in order to quantify the error and show the robustness with respect to the learning rates, we consider the following error metrics for the MFG problem and their corresponding counterparts for the MFC problem (with hat replaced by star).

\begin{enumerate}
    \item The absolute error in the learned population mean:
    \[
        e_\mu := \|m_N-\hat{m}\|_2,
    \]
  \item The expected absolute error in the learned control:
  \[
    e_\alpha := \mathbbm{E}_{x\sim\hat\mu}[\|\alpha_{\psi_N}(x)-\hat{\alpha}(x)\|_2] \approx \frac{1}{l}\sum_{j=1}^l \|\alpha_{\psi_N}(x_j)-\hat{\alpha}(x_j)\|_2,
  \]
  \item The expected absolute error in the learned value function:
  \[
    e_{V} := \mathbbm{E}_{x\sim\hat\mu}[\|V_{\theta_N}(x)-\hat{v}(x)\|_2] \approx \frac{1}{l}\sum_{j=1}^l \|V_{\theta_N}(x_j)-\hat{v}(x_j)\|_2,
  \]
\end{enumerate}
where $x_j$ are i.i.d. with distribution $\hat\mu$ for $j = 1,2,...,l$.
    
In Table \ref{table:3}, we show the learning rates for the four considered  cases. The learning rate for the critic is  in the range $[10^{-3},10^{-4}]$ and the learning rate ratio between the critic and the actor is in the range $[2,5]$. 
We fix the population distribution learning rate in MFG as $10^{-6}$ and in MFC as $10^{-2}$.
Results for the three error metrics, $(e_\mu,e_\alpha,e_V)$ using $l=10000$, are shown in Table \ref{table:4} for MFG and in Table \ref{table:5} for MFC. We conclude that our algorithms are stable with respect to these hyper-parameters.

\begin{table}[h!]
    \centering
    \begin{tabular}{ |p{1cm}|p{1.5cm}|p{1.5cm}|p{1.5cm}|p{1.5cm}|  }
\hline
& Case 1 & Case 2 & Case 3 & Case 4 \\
\hline
$\rho^V$ & $10^{-3}$ & $10^{-4}$& $10^{-3}$ & $10^{-4}$ \\
$\rho^{\Pi}$&$5\times10^{-4}$ & $5\times10^{-5}$&$2\times10^{-4}$ & $2\times10^{-5}$\\
\hline
\end{tabular}
    \caption{Learning rate of different cases}
    \label{table:3}
\end{table}
\begin{table}[h!]
    \centering
    \begin{tabular}{ |p{1cm}|p{1.5cm}|p{1.5cm}|p{1.5cm}|p{1.5cm}|  }
\hline
& Case 1 & Case 2 & Case 3 & Case 4 \\
\hline
$e_{\mu}$ & 0.030 &0.028 &0.030 & 0.036 \\
$e_{\alpha}$ & 0.048 & 0.045 &0.046 & 0.050 \\
$e_{V}$ &0.054 & 0.046 & 0.054 &0.058\\
\hline
\end{tabular}
    \caption{MFG: Error for different choice of learning rate}
    \label{table:4}
\end{table}

\begin{table}[h!]
    \centering
    \begin{tabular}{ |p{1cm}|p{1.5cm}|p{1.5cm}|p{1.5cm}|p{1.5cm}|  }
\hline
& Case 1 & Case 2 & Case 3 & Case 4 \\
\hline
$e_{\mu}$ & 0.053 &0.045 &0.044 & 0.052 \\
$e_{\alpha}$ & 0.075 & 0.057 &0.069 & 0.072 \\
$e_{V}$ &0.060 & 0.049 & 0.065 & 0.062\\
\hline
\end{tabular}
    \caption{MFC: Error for different choice of learning rate}
    \label{table:5}
\end{table}
\subsection{A Two-Dimensional Linear-Quadratic Benchmark}
\label{sec:Num2d}
We also test our algorithm on a higher-dimensional generalization of the scalar LQ problem with the cost functional
\begin{align}
    \mathbbm{E}\left[\int_0^\infty e^{\beta t} \left(\frac{1}{2}\alpha_t^\top\alpha_t+(x-C_2m)^\top C_1(x-C_2m) + (x-c_4)^\top C_3(x-c_4) + m^\top C_5 m\right) \mathrm{d}t\right]
\end{align}
and state dynamics
\begin{align}
    \mathrm{d}X_t = \alpha_t\mathrm{d}t + \sigma \mathrm{d}W_t,\quad t\in [0,\infty).
\end{align}
where $C_1,C_3,C_5\in\mathbbm{R}^{d\times d}$ are positive-definite matrices, $c_4\in\mathbbm{R}^d$, $C_2\in\mathbbm{R}^{d\times d}$, $\sigma\in \mathbbm{R}^{d\times m}$, and $W_t$ is an m-dimensional Brownian motion. We test our algorithm on a benchmark problem for the case $d=m=2$.

\subsubsection{Numerical Details}
We randomly generated the following parameters which distinguish the MFG and MFC solutions and we use 36 bins for MFC, 9 for the state space and 4 for the action space. 
\setlength{\arrayrulewidth}{0.5mm}
\setlength{\tabcolsep}{8pt}
\renewcommand{\arraystretch}{1.5}
\begin{table}[h!]
    \centering
    \begin{tabular}{ |p{2.4cm}|p{2.4cm}|p{2.4cm}|p{1.3cm}|p{2.4cm}|p{1.7cm}|  }
\hline \centering
c1&\centering c2&\centering c3&\centering c4&\centering c5&$\quad\quad\sigma$ \\
\hline
$\begin{bmatrix}0.964&0.236\\0.236&0.076\end{bmatrix}$ &  $\begin{bmatrix}0.677&0.937\\0.937&1.357\end{bmatrix}$&
 $\begin{bmatrix}0.988&1.118\\1.118&1.483\end{bmatrix}$&
 $\begin{bmatrix}0.810\\0.872\end{bmatrix}$&
$\begin{bmatrix}1.511&0.072\\0.072&1.520\end{bmatrix}$&
$\begin{bmatrix}0.3&0.0\\0.0&0.3\end{bmatrix}$\\
\hline
\end{tabular}
    \caption{Running cost coefficients and volatility}
    \label{tabel:3}
\end{table}

The results for this set of parameters are shown in the following pictures.
\begin{figure}%
\center 
\subfloat[Result for MFG]
{\includegraphics[width=0.45\textwidth]{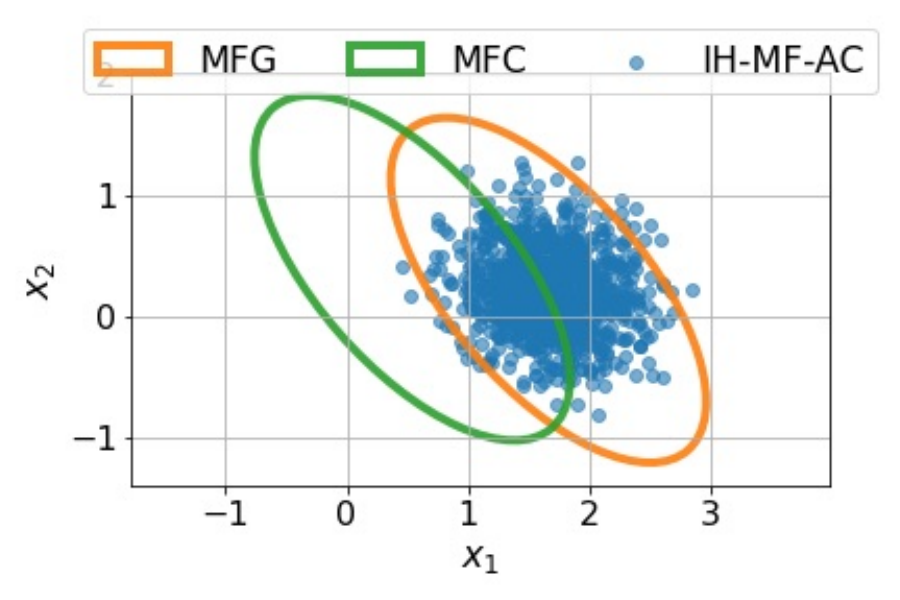}\label{fig:Ex_Im5}}
\subfloat[Result for MFC]{\includegraphics[width=0.45\textwidth]{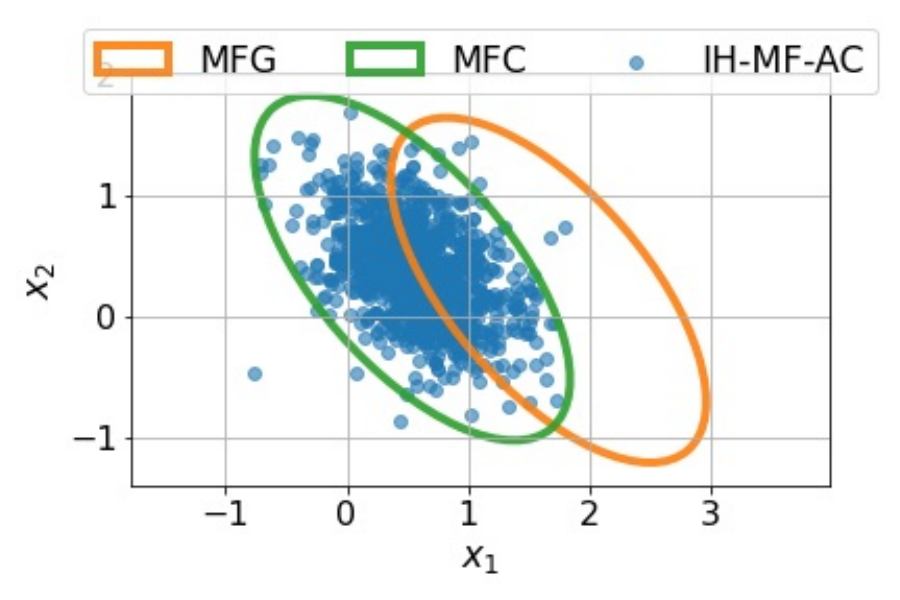}\label{fig:Ex_Im6}}
\caption{The scatter plot of points are the samples of the learned asymptotic distribution after $N=10^6$ iterations. The solid ellipses are the set of points with Mahalanobis distance $3$ from the optimal mean field distributions in the case of MFG (orange) and MFC (green).}
\label{fig:3}
\end{figure}

The plots in Figure \ref{fig:3}, shows the center and the set of points with Mahalanobis distance 3 from the optimal population distributions for MFG on the left  and MFC on the right. The dots are samples of the learned distributions; as desired, they are distributed within the theoretical ellipses. 

Note that the introduction of bins in the MFC algorithm significantly improves the identification of the limiting population distribution (Figure \ref{fig:Ex_Im6}) compared to the results in \cite{angiuli2023deep}.

\begin{figure}%
\center 
\subfloat[Result for MFG]
{\includegraphics[width=0.9\textwidth]{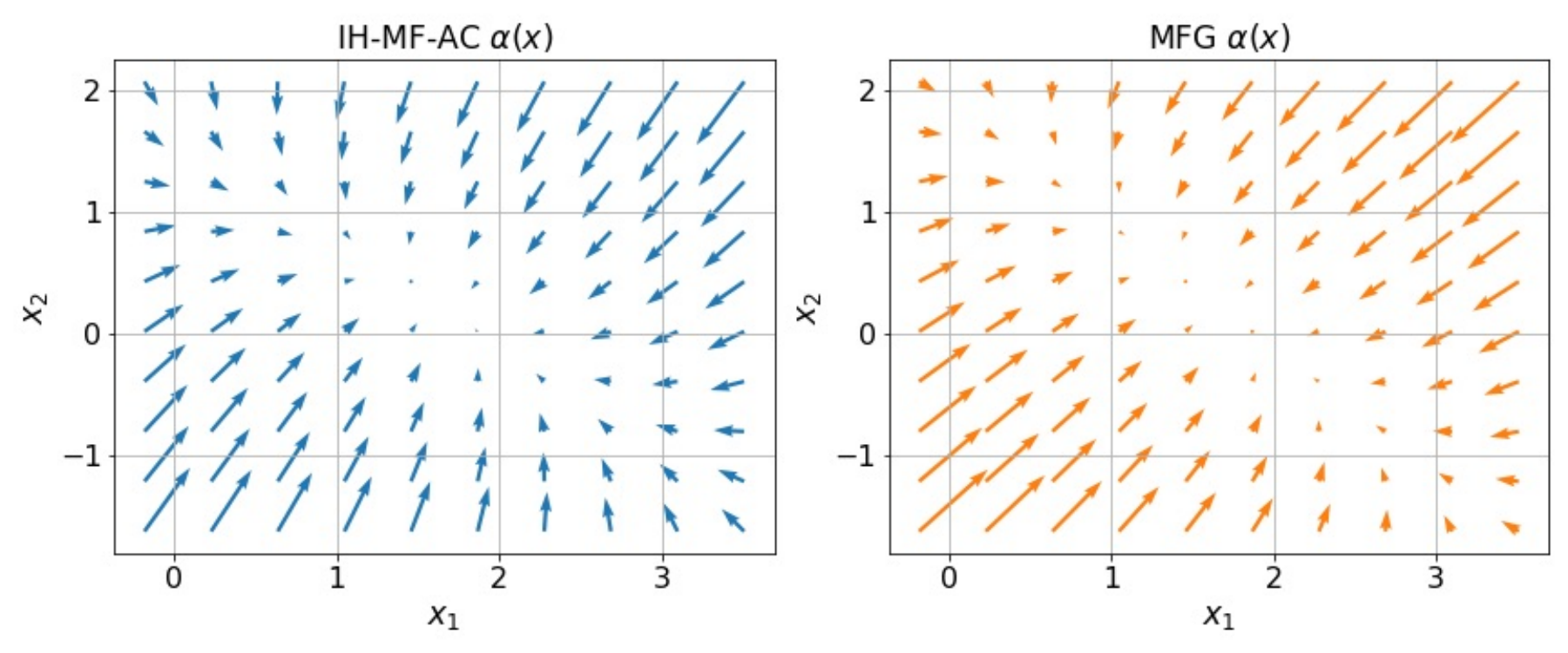}\label{fig:Ex_Im7}}\\
\subfloat[Result for MFC]{\includegraphics[width=0.90\textwidth]{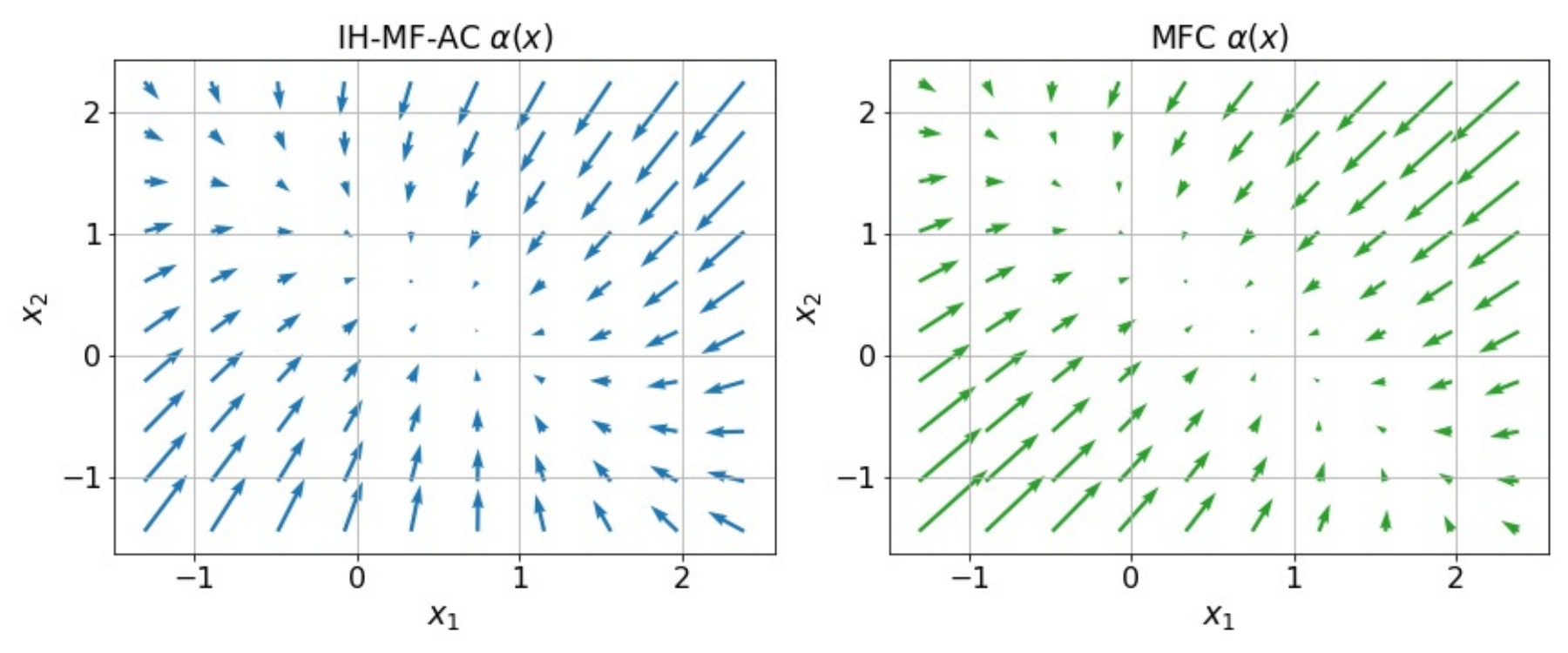}\label{fig:Ex_Im8}}
\caption{The orange and green vector fields are the optimal controls for the MFG and MFC problems, respectively. The blue vector field show the learned feedback controls after $N=10^6$ iterations.}
\label{fig:4}
\end{figure}

The plots in Figure \ref{fig:4} show the vector fields of learned (left) and theoretical (right) policies for MFG (top) and MFC (bottom). The learned policy within the learned population distribution from Figure \ref{fig:3} is visually quite close to the theoretical policy within the theoretical distribution also from Figure \ref{fig:3}. There are  inconsistencies outside the distribution support but this does not affect our mean absolute error. 

\begin{figure}%
\center 
\subfloat[Result for MFG]
{\includegraphics[width=0.9\textwidth]{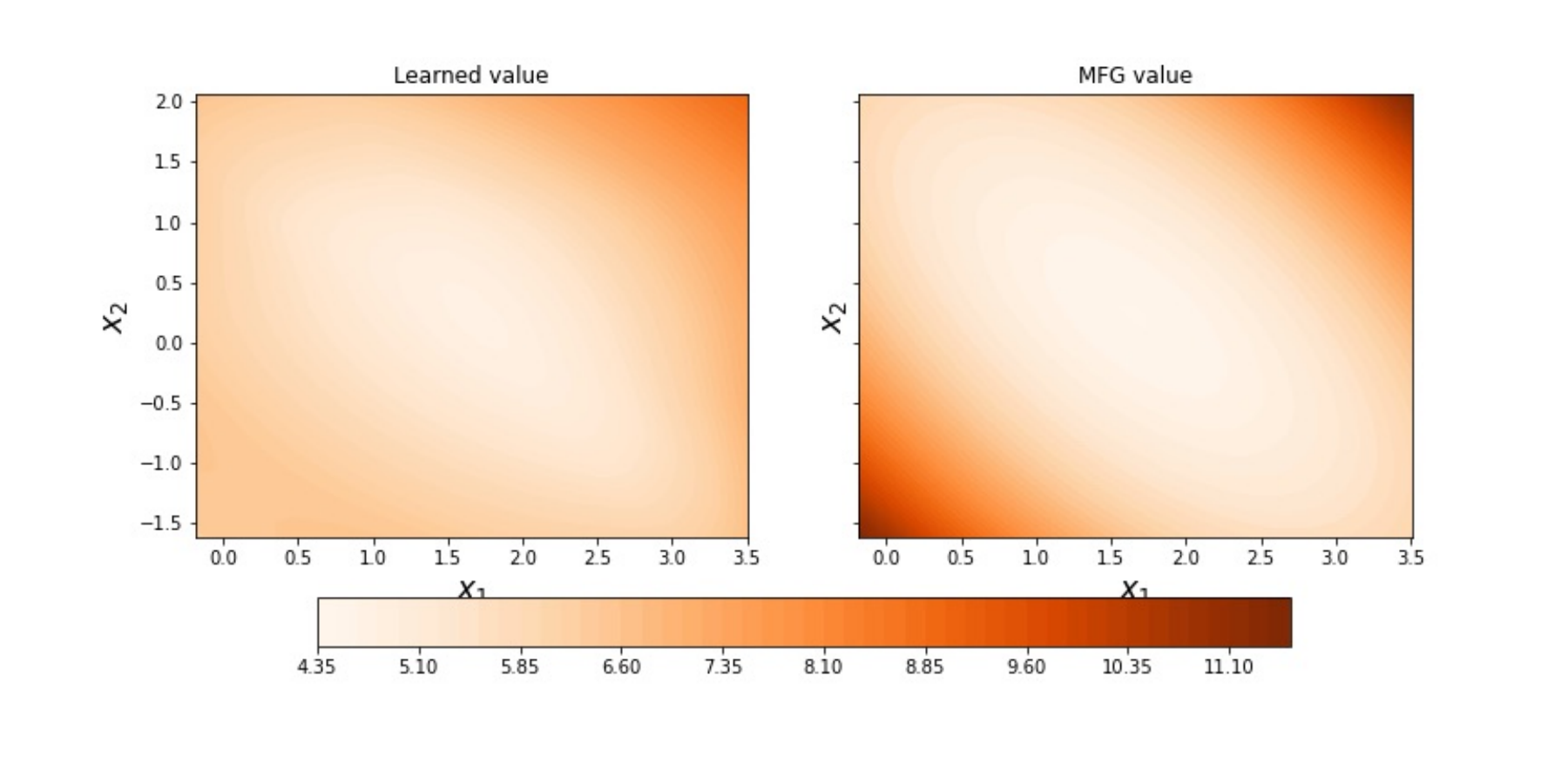}\label{fig:Ex_Im9}}\\
\subfloat[Result for MFC]{\includegraphics[width=0.90\textwidth]{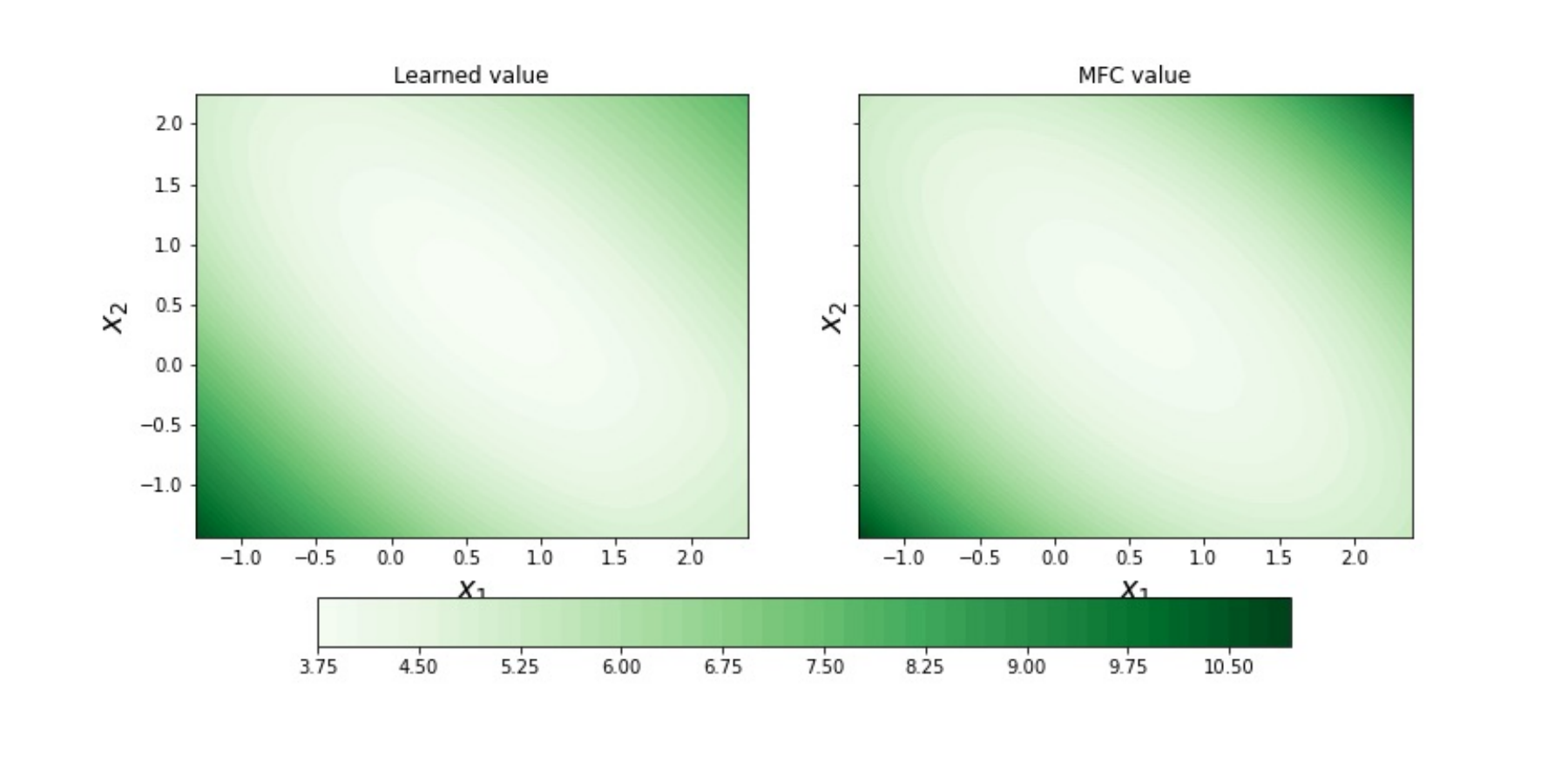}\label{fig:Ex_Im10}}
\caption{The right-hand side plots are the optimal value functions for the MFG and MFC problems, respectively. The left-hand side plots show the learned functions after $N=10^6$ iterations.}
\label{fig:5}
\end{figure}

The plots in Figure \ref{fig:5} show the temperature plots for the learned (left) and theoretical (right) value functions for MFG (top) and MFC (bottom). Similarly, we find that within the distribution supports in Figure \ref{fig:3}, the learned value function is close to the theoretical value function. Inconsistencies outside of the distribution supports will not influence the mean absolute error.

\section{Actor-critic Algorithm for Mean Field Control Games}\label{sec:MFCG}
As observed in \cite{10.1145/3533271.3561743}, in the case of tabular Q-learning, our IH-MFG-AC algorithm (Algorithm~\ref{algoMFG}) can easily be extended to the case of mixed mean field control game problems that involve two population distributions, a local one and a global one. This type of game corresponds to competitive games among a large number of large groups, where agents within each group collaborate. The local distribution represents the distribution within each ``representative'' agent's group, while the global distribution represents the distribution of the entire population.

Such games are motivated as follows: Consider a finite population of agents consisting of \( M \) groups, each containing \( N \) agents. An agent indexed by \( (m,n) \) indicates that she is the \( n \)-th member of the \( m \)-th group. Agents collaborate within their respective groups (sharing the same first index) and compete with all agents in other groups. In other words, all \( N \) agents in group \( m \) collectively work to minimize the total cost of group \( m \). The MFCG corresponds to the mean field limit as \( N \) and \( M \) tend to infinity, and serves as a proxy for the solution in the finite-player case. We refer to \cite{10.1145/3533271.3561743,JML-2-2, angiuli2024analysismultiscalereinforcementqlearning} for further details on MFCG, including the limit from finite-player games to infinite-player games. Note that the solution gives an approximation of the Nash equilibrium between the competitive groups.

The solution of an infinite-time horizon MFCG is a policy and population distribution pair
$
(\hat{\pi}, \hat{\mu}) \in \mathcal{A} \times \mathcal{P}(\mathbb{R}^d)
$
satisfying the following:

\begin{enumerate}
    \item \( \hat{\pi} \) solves the MFC problem
    \begin{align}
        \inf_{\pi \in \mathcal{A}} J_{\hat{\mu}}(\pi) = \inf_{\pi \in \mathcal{A}} \mathbb{E} \left[ \sum_{n=0}^\infty \gamma^n f\left(X_n^{\pi,\hat{\mu}}, \hat{\mu}, \mu^{\pi,\hat{\mu}}, A_n\right) \, \right],
        \tag{8.1}
        \label{8.1}
    \end{align}
    where the process $(X^{\pi,{\hat\mu}}_{n})_{n\geq 0}$ follows the dynamics
\begin{align*}
    \begin{cases}
        X_0^{\pi,\hat{\mu}} \sim \mu_0,
        \\
        P(X^{\pi,{\hat\mu}}_{n+1} \in \mathcal{B} |X^{\pi,{\hat\mu}}_{n}=x,\mu=\hat\mu,\mu'=\mu^{\pi,\hat\mu},A_{n}=a ) 
    = \int_{x'\in \mathcal{B}  } p(x,x',{\hat\mu},\mu^{\pi,\hat\mu},a)dx',\\
     A_{n} \sim \pi(\cdot|X_{n}^{\pi,\hat\mu})\quad \mbox{independently at each time}\,\, n\geq 0.
    \end{cases}
\end{align*}
    and 
    \[
    \mu^{\pi,\hat{\mu}} = \lim_{n \to \infty} \mathcal{L}\left(X_n^{\pi,\hat{\mu}}\right).
    \]
    
    \item Fixed point condition: \( \hat{\mu} = \lim_{n \to \infty} \mathcal{L}(X_n^{\hat{\pi}, \hat{\mu}}) \).
\end{enumerate}

Note that conditions 1 and 2 above imply that \( \hat{\mu} = \mu^{\hat{\pi}, \hat{\mu}} \). 

We can easily modify our MFC algorithm (Algorithm~\ref{algo:MFC}) by adding a slowly evolving measure to get a full algorithm as shown in Algorithm~\ref{algo:MFCG}.
\begin{algorithm}[H]
\caption{Infinite Horizon Mean Field Control Game Actor-Critic (IH-MFCG-AC) \label{algo:MFCG}} 
\begin{algorithmic}[1]
\REQUIRE 
Initial distribution $\mu^i$ and $\mu$; number of time steps $N$;  time-dependent neural network learning rates: for actor $\rho_n^\Pi$, and for critic: $\rho_n^V$; time-dependent learning rates $\rho_n$ for the population distribution; discounting coefficient $\gamma<1$
\STATE \textbf{Initialize neural network:} 
Actor $\Pi_{\psi_0}$ : $\mathbbm{R}^d\to\mathcal{P}(\mathbbm{R}^k)$, Critics $V_{\theta^i_0}$ : $\mathbbm{R}^d\to\mathbbm{R}$

\STATE \textbf{Sample} $X^i_{0}\sim \mu^i$, $X_{0}\sim \mu$ and save $\mu^i_0$ ,$\mu_0$, $\tilde\mu_0$ as $\delta_{X^i_0}$ ,$\delta_{X_0}$, $\delta_{X_0}$  \\
\FOR{$n=0,1,2,\dots, N-1$}
\IF{$X^i_{n}\in \proj_{\mathcal{X}}B^i$}
\STATE \textbf{Choose action:}  $A^{i}_{n}=a^i$
\STATE
\textbf{Observe reward from the environment:} $r^{i}_{n+1} = -f(X^i_{n},\mu^i_n,\tilde\mu_n,a^i)$
\STATE
\textbf{Observe the next state from the environment:} 
$
    X^{i}_{n+1} \sim p(X^i_{n},\cdot,\mu^i_{n},\tilde\mu_n, a^i)
$

\STATE
\textbf{Find Minimum Value at $X^{i}_{n+1}$:} $V_{n}(X^{i}_{n+1}) = \min_{j}V_{\theta^j_n}(X^{i}_{n+1})$ for $j$ such that $X^{i}_{n+1}\in \proj_{\mathcal{X}}B^j$
\STATE
\textbf{Compute TD target:} $y^{i}_{n+1} = r^{i}_{n+1} + \gamma 
V_n(X^{i}_{n+1})$

\STATE
\textbf{Compute TD error:} $\delta_{\theta^i_n} = y^{i}_{n+1} - V_{\theta^i_n}(X^i_{n})$

\STATE
\textbf{Compute critic loss:} $L_V^{(n)}(\theta^i_n) =(\delta_{\theta^i_n})^2 $

\STATE
\textbf{Update critic with SGD:} $\theta^i_{n+1} = \theta^i_n - \rho^V_n\nabla_{\theta^i} L_V^{(n)}(\theta^i_n)$ (freeze the TD target and take gradient only on $V_{\theta^i_n}$)

\ELSE
\STATE \textbf{Sample action:}  $A^i_{n} \sim \Pi_{\psi_n}(\cdot|X^i_{n})$ 
\STATE \textbf{Observe the next state from the environment:} 
$
    X^{i}_{n+1} \sim p(X^i_{n},\cdot,\mu^i_{n},\tilde\mu_n, A_n^i)
$
\ENDIF
\STATE \textbf{Update measure of bins:} $\mu_{n+1}^i = \mu_{n}^i-\rho_n (\mu_{n}^i-\delta_{X^{i}_{n+1}})$(use empirical distribution and save it as a weighted list)

\STATE \textbf{Sample action for the individual path:}  $A_{n} \sim \Pi_{\psi_n}(\cdot|X_{n})$
\STATE
\textbf{Observe reward from the environment:} $r_{n+1} = -f(X_{n},\mu_{n},\tilde\mu^i_n,A_{n})$. 
\STATE
\textbf{Observe the next state from the environment:} 
\[
X_{n+1} \sim p(X_{n},\cdot,\mu_{n},\tilde\mu_n, A_{n})
\]
\STATE
\textbf{Find $V_{n}(X_{n})$ and $V_n(X_{n+1})$:} Find $V_{n}(X_{n})$ and $V_n(X_{n+1})$ by definition as follows 
\[
V_{n}(x) = \min_j V_{\theta^j_n}(x) \text{ for } j \text{ such that } x\in \proj_{\mathcal{X}}B^j
\]
\STATE
\textbf{Compute TD target:} $y_{n+1} = r_{n+1} + \gamma V_n(X_{n+1})$
\STATE
\textbf{Compute TD error:} $\delta_{\theta_n} = y_{n+1} - V_{n}(X_{n})$
\STATE
\textbf{Compute actor loss:} $L_\Pi^{(n)}(\psi_n) =-\delta_{\theta_n}\log \Pi_{\psi_n}(A_{n}|X_{n}) $

\STATE
\textbf{Update actor with SGD:} $\psi_{n+1} = \psi_n - \rho^\Pi_n \nabla_\psi L_\Pi^{(n)}(\psi_n)$
\STATE \textbf{Update fast measure (local distribution) for individual path:} $\mu_{n+1} = \mu_{n}-\rho_n(\mu_n- \delta_{X_{n+1}})$(use empirical distribution and save it as a weighted list)
\STATE \textbf{Update slow measure (global distribution) for individual path:} $\tilde\mu_{n+1} = \tilde\mu_{n}-\tilde{\rho}_n(\mu_n- \delta_{X_{n+1}})$(use empirical distribution and save it as a weighted list)

\ENDFOR
\STATE {\textbf{Return}} $(\Pi_{\psi_N},\mu_N)$
\end{algorithmic}
\end{algorithm}

\subsection{MFCG Linear-quadratic Benchmark}
We test Algorithm~\ref{algo:MFCG} on the following linear-quadratic MFCG. The individual agent wish to minimize
\begin{equation*}
	\mathbb{E} \left[ \int_0^\infty e^{-\beta t} \left( 
	\frac{1}{2} \alpha_t^2 + c_1 \left( X_t^{\alpha,\mu} - c_2 m \right)^2 + c_3 \left( X_t^{\alpha,\mu} - c_4 \right)^2 
	+ \tilde{c}_1 \left( X_t^{\alpha,\mu} - \tilde{c}_2 m^{\alpha,\mu} \right)^2 + \tilde{c}_5 (m^{\alpha,\mu})^2
	\right) dt \right]
\end{equation*}
subject to the dynamics
\begin{equation}\label{8.5}
	dX_t^{\alpha,\mu} = \alpha_t \, dt + \sigma \, dW_t, \quad t \in [0,\infty),
\end{equation}
where
\[
m = \int x \, d\mu(x), \quad m^{\alpha,\mu} = \int x \, d\mu^{\alpha,\mu}(x),
\]
and the fixed point condition
\[
m = \lim_{t \to \infty} \mathbb{E} \left( X_t^{\hat{\alpha},\mu} \right) = m^{\hat{\alpha},\mu},
\]
where \( \hat{\alpha} \) is the optimal action.

We present the analytic solution to the MFCG problem using notation consistent with the derivation in \cite{JML-2-2}. The value function is defined as
\begin{equation*}
v(x) := \inf_{\alpha \in \mathcal{A}} \mathbb{E} \left[ \int_0^\infty e^{-\beta t} \left( 
\frac{1}{2} \alpha_t^2 + c_1 \left( X_t^{\alpha,\mu} - c_2 m \right)^2 + c_3 \left( X_t^{\alpha,\mu} - c_4 \right)^2 
+ \tilde{c}_1 \left( X_t^{\alpha,\mu} - \tilde{c}_2 m^{\alpha,\mu} \right)^2 + \tilde{c}_5 (m^{\alpha,\mu})^2
\right) dt \ \bigg| \ X_0 = x \right]
\end{equation*}

The explicit formula \( v(x) = \Gamma_2 x^2 + \Gamma_1 x + \Gamma_0 \) can be derived as the solution to the Hamilton-Jacobi-Bellman equation where
\[
\Gamma_2 = \frac{-\beta + \sqrt{\beta^2 + 8(c_1 + c_3 + \tilde{c}_1)}}{4},
\]
\[
\Gamma_1 = -\frac{2 \Gamma_2 c_3 c_4}{c_1 (1 - c_2) + \tilde{c}_1 (1 - \tilde{c}_2)^2 + c_3 + \tilde{c}_5},
\]
\[
\Gamma_0 = \frac{c_1 c_2^2 m^2 + (\tilde{c}_1 \tilde{c}_2^2 + \tilde{c}_5)(m^{\alpha,\mu})^2 + \sigma^2 \Gamma_2 - \Gamma_1^2 / 2 + c_3 c_4^2}{\beta}.
\]

Then the optimal control for the MFCG is
\begin{equation}\label{8.7}
\hat{\alpha}(x) = - (2 \Gamma_2 x + \Gamma_1).
\end{equation}

Substituting \eqref{8.7} into \eqref{8.5} yields the Ornstein-Uhlenbeck process
\[
    dX_t = -(2\Gamma_2 X_t + \Gamma_1) dt + \sigma dW_t,
\]
whose limiting distribution is
\begin{equation*}
	\hat{\mu} = \mu^{\hat{\alpha}, \hat{\mu}} = \mathcal{N} \left( -\frac{\Gamma_1}{2\Gamma_2}, \frac{\sigma^2}{4\Gamma_2} \right).
\end{equation*}
We note that an equation for $\hat m$ and $m^{\hat\alpha,\hat\mu}$ that only depends on the running cost coefficients is 
\[
    m:=\hat m = m^{\hat\alpha,\hat\mu} = \frac{c3c4}{c1(1-c2)+\tilde{c}_1(1-\tilde{c}_2)^2+c3+\tilde{c}_5}.
\]

\subsection{Numerical Results}
The results of the MFCG Algorithm are presented in Figs.~\ref{fig:mfcg-histo} and~\ref{fig:mfcg-value}.
\begin{figure}%
\center 
\subfloat[Result for Local Distribution]
{\includegraphics[width=0.45\textwidth]{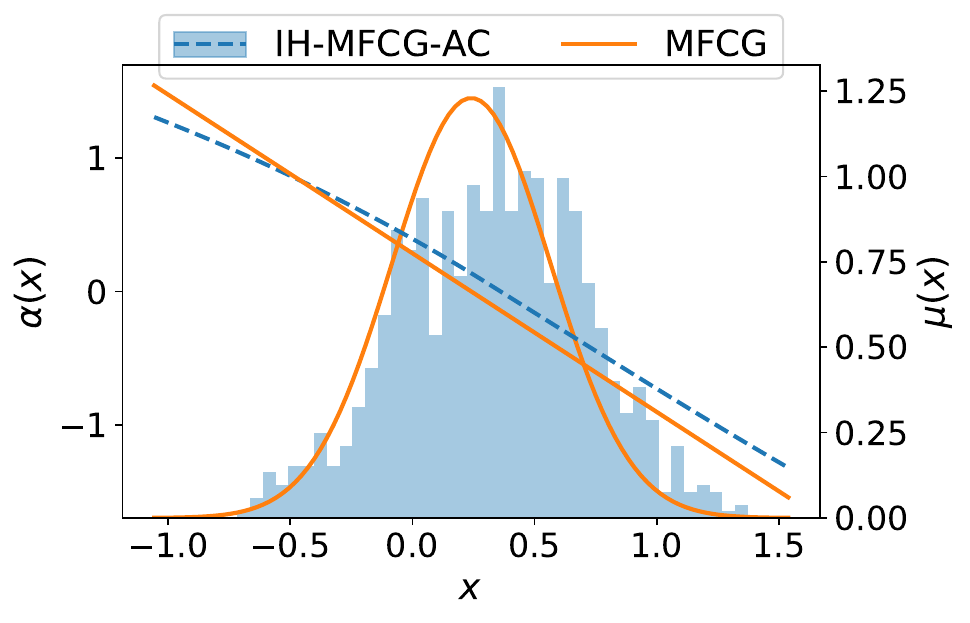}\label{fig:Ex_Im11}}
\subfloat[Result for Global Distribution]{\includegraphics[width=0.45\textwidth]{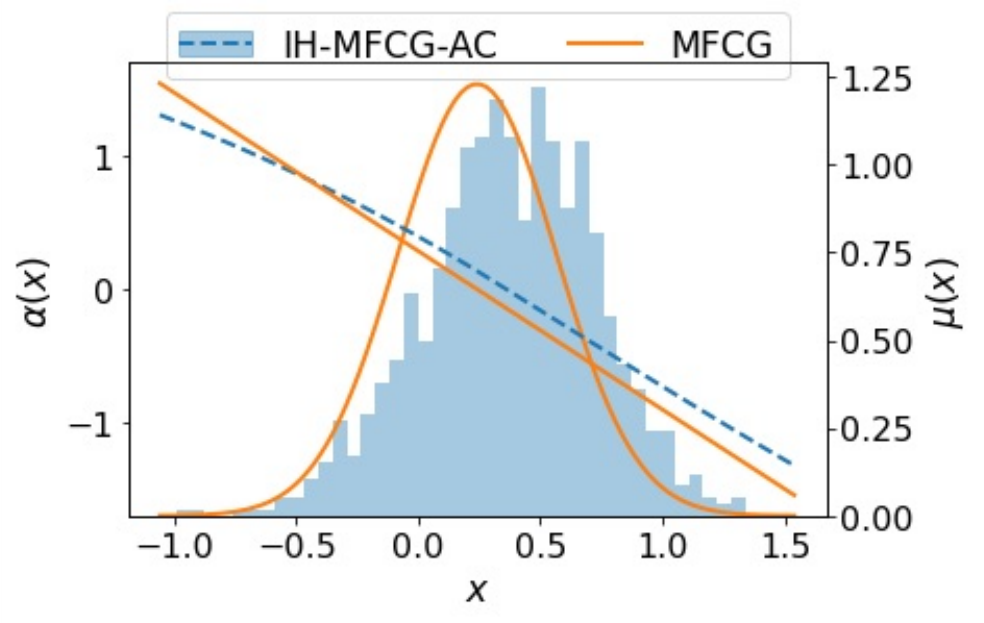}\label{fig:Ex_Im12}}
\caption{The histogram are the learned asymptotic distributions and the dashed line is the learned feedback control after $N=3\times10^6$ iterations. The orange curves correspond to the optimal control and mean field distribution for MFCG. The bottom axis shows the state variable $x$, the left axis refers to the value of the control $\alpha(x)$, and the right axis represents the probability density for $\mu(x)$.}
\label{fig:mfcg-histo}
\end{figure}

\begin{figure}%
\center 
\subfloat[Result for MFG]
{\includegraphics[width=0.45\textwidth]{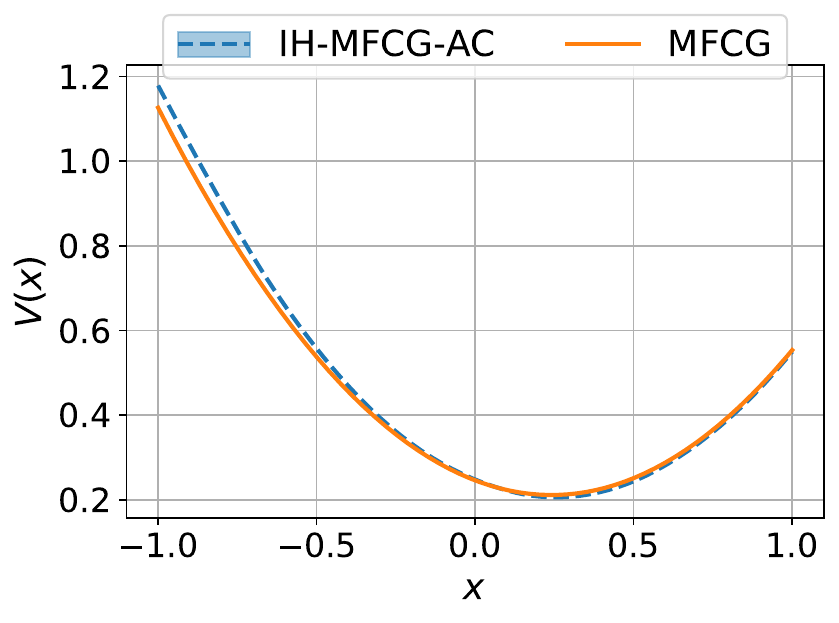}\label{fig:Ex_Im13}}
\caption{The orange curve is the optimal value function for the MFCG problem. The blue dashed line is the learned value function after $N = 3\times 10^6$ iterations. The light blue shaded region depicts one standard deviation from the learned value.}
\label{fig:mfcg-value}
\end{figure}

\section{Conclusion}
We analyze the deep actor–critic reinforcement learning algorithm introduced by \cite{angiuli2023deep} in the setting of continuous state and action spaces, discrete time, and an infinite time horizon. The method, generically called Infinite Horizon Mean Field Actor-Critic (IH-MF-AC), employs neural networks to parameterize both the policy and the value function, from which the optimal control is derived. Depending on the ratio of two learning rates—one for the value function and the other for the mean field term—the algorithm provides solutions to either Mean Field Game (MFG) or Mean Field Control (MFC) problems. We establish a rigorous convergence analysis and characterize the limiting points as MFG and MFC solutions, respectively. We then benchmark the method numerically on a linear–quadratic problem and show that IH-MF-AC closely recovers analytic solutions with high accuracy. In addition, we introduce a modified version, IH-MFCG-AC, designed to address Mean Field Control Game (MFCG) problems, and present preliminary results demonstrating its effectiveness. Promising directions for future research include extending the method to finite-horizon settings.

\bibliographystyle{unsrt} 
\bibliography{bibtex}
\end{document}